\newcommand{\Z}{\mathbb{Z}}
\def\rar{\rightarrow}
\def\CO{{\mathcal{O}}}
\theoremstyle{plain}
\newtheorem{theorem}{Theorem}[section]
\newtheorem{proposition}[theorem]{Proposition}
\newtheorem{lemma}[theorem]{Lemma}
\newtheorem{definition}[theorem]{Definition}
\newtheorem{example}[theorem]{Example}
\theoremstyle{remark}
\newtheorem{remark}[theorem]{Remark}
\newcommand{\aisle}[1]{\ensuremath{\mathcal{#1} ^{\leq 0}}}
\newcommand{\cat}[1]{\ensuremath{\mathcal{#1}}}
\newcommand{\deriveq}[1]{\ensuremath{\mathbf{D} (\mathrm{Qcoh\hspace{1mm}} #1)}}
\newcommand{\deriveb}{\ensuremath{\mathbf{D}^b (\mathrm{Coh \hspace{1mm}} \mathbb{P}^1_k)}}
\newcommand{\Ext}{\ensuremath{\mathrm{Ext}}}
\newcommand{\Hom}{\ensuremath{\mathrm{Hom}}}
\newcommand{\perfect}[1]{\ensuremath {\mathrm{{Perf}}{(#1)}}}
\newcommand{\preaisle}[1]{\ensuremath{\langle \mathcal{#1} \rangle ^{\leq 0} }}
\newcommand{\spec}[1]{\ensuremath{\mathrm{Spec } #1}} 
\newcommand{\Supp}{\ensuremath{\mathrm{Supp}}}
\newcommand{\tensor}{\ensuremath{\otimes}-}
\title[Tensor weight structures and t-structures on derived categories]{Tensor weight structures and t-structures on derived categories of Noetherian schemes}
\date{} 
\dedicatory{}
\author[Umesh V  Dubey]{Umesh V Dubey}
\address{Harish-Chandra Research Institute,  A CI of Homi Bhabha National Institute,  Chhatnag 
Road,  Jhunsi,  Prayagraj  211019,  India.}
\curraddr{}
\email{umeshdubey@hri.res.in}
\thanks{}
\author[Gopinath Sahoo]{Gopinath Sahoo}
\address{Harish-Chandra Research Institute,  A CI of Homi Bhabha National Institute,  Chhatnag Road,  Jhunsi,  Prayagraj  211019,  India.}
\curraddr{}
\email{gopinathsahoo@hri.res.in}
\thanks{}
\subjclass[2010]{Primary  14F08,  secondary 18G80}
\keywords{Derived categories, t-structures,  weight structures.}
\begin{document}

\begin{abstract}
 	We give a condition which characterises those weight structures on a derived category 
 	which come from a Thomason filtration on the underlying scheme.  Weight 
 	structures satisfying our condition will be called $\otimes ^c$-weight structures. More 
 	precisely,  for a Noetherian separated scheme $X$, we give a bijection between the set of 
 	compactly generated $\otimes ^c$-weight structures on $\deriveq X$ and the set of 
 	Thomason filtrations of $X$.  We achieve this classification in two steps. First, we 
 	show that the bijection \cite[Theorem 4.10]{SP16} restricts to give a bijection between the 
 	set of compactly generated $\otimes ^c$-weight structures and the set of compactly 
 	generated tensor t-structures. We then use our earlier classification of compactly 
 	generated tensor t-structures to obtain the desired result.  We also study some immediate 
 	consequences of these classifications in the particular case of the projective line.  We show 
 	that in contrast to the case of tensor t-structures,  there are no non-trivial tensor weight 
 	structures on $\deriveb$. 
\end{abstract}

\maketitle

%\tableofcontents

%===========================Write here===============================

\section{Introduction}

Weight structures on triangulated categories were introduced by Bondarko \cite{Bon10} as an 
important natural counterpart of t-structures with applications to Voevodsky's category of
motives.   Pauksztello independently came up with the same notion while trying to obtain a dual 
version of a result due to Hoshino,  Kato and Miyachi; he termed it co-t-structures,  see 
\cite{Pau08}.  It has been observed by Bondarko that the two notions, t-structures and weight 
structures, are connected by interesting relations.  In this vein,  S\v{t}ov\'{\i}\v{c}ek and 
Posp\'{\i}\v{s}il have proved for a certain class of triangulated categories, the collection of 
compactly generated t-structures and compactly generated weight structures are in bijection 
\cite[Theorem 4.10]{SP16} with each other, where the bijection goes via a duality at the 
compact level.  In particular, this bijection holds in the derived category of a Noetherian ring 
$R$ and  since in this case, we have the classification of compactly generated t-structures in 
terms of Thomason filtrations of $\spec R$ \cite[Theorem 3.11]{AJS10},  they obtain 
a classification of compactly generated weight structures of $\mathbf{D}(R)$.

Our aim in this short article is twofold: first to generalize the theorem of {S}\v{t}ov\'{\i}\v{c}ek 
and Posp\'{\i}\v{s}il  \cite[Theorem 4.15]{SP16} to the case of separated Noetherian schemes,  
and second to understand the two types of notions, in the simplest 
non-affine situation - the derived category of the projective line over a field $k$.  Our
interest in this special case arose partly from the work of Krause and Stevenson \cite{KS19},  
where the authors study the localizing subcategories of $\mathbf{D}(\mathrm{Qcoh 
\hspace{1mm}} \mathbb{P} ^1 _k)$, and partly from our desire to better understand the
general results.

In our earlier work \cite{DS22},  we have shown that a t-structure on $\deriveq X$ supported on 
a Thomason filtration of a Noetherian scheme $X$ satisfies a tensor condition.  We call 
them tensor t-structures.   In this article, we introduce the analogous notion of tensor weight 
structures, also a slightly weaker notion which we call $\otimes ^c$-weight structures.  We 
then show that the bijection \cite[Theorem 4.10]{SP16} restricts to a bijection between tensor 
t-structures and $\otimes ^c$-weight structures; this can be seen as a consequence of our 
Lemma \ref{preaisle and copreaisle} and Theorem \ref{t-structure}.  Next, we specialize to the 
case of the derived categories of separated Noetherian schemes and classify compactly 
generated $\otimes ^c$-weight structures in this case,  see Theorem \ref{tensor weight
structure}.

In the last section, we apply all the general theory and the classification results to 
the derived category of the projective line over a field $k$.  By our Theorem \ref{t-structure} 
classifying compactly generated tensor t-structures of $\mathbf{D}
(\mathrm{Qcoh \hspace{1mm}} \mathbb{P}^1_k)$ is equivalent to classifying thick 
\tensor preaisles of  $\deriveb$,  so we restrict our attention to \tensor preaisles of $\deriveb$.  
We give a complete description of the \tensor preaisles in Proposition \ref{Main}, and  in 
Proposition \ref{aisle} we determine which of these are aisles or in other words give rise to t-
structures on $\deriveb$.  The result of Proposition \ref{aisle} is not new, it can possibly be 
deduced from \cite{Bez10}; also in \cite{Rudakov}, the authors describe the bounded 
t-structures on $\deriveb$ by using the classification of t-stabilities on $\deriveb$.  Finally,  we 
consider the same question for tensor weight structures,  and to our surprise, we discovered 
that there are no non-trivial tensor weight structures on $\deriveb$.

\section{Preliminaries}

Let \cat T be a triangulated category and $\cat T^c$  denote the full subcategory of compact
objects.  We recall the definition of t-structures which was introduced in \cite{BBD}.

\begin{definition}
	A \emph{t-structure} on \cat T is a pair of full subcategories $(\cat U, \cat V)$ satisfying the 
	following properties:
	\begin{itemize}
		\item[t1.] $\Sigma \cat U \subset \cat U$ and $\Sigma^{-1}  \cat V \subset \cat V$.
		\item[t2.] $\cat U \perp \Sigma^{-1} \cat V$.
		\item[t3.] For any $T \in \cat T$ there is a distinguished triangle
		\[ U \rar T \rar V \rar \Sigma U\]
		where $U \in \cat U$ and $V \in \Sigma^{-1} \cat V$.  We call such a triangle 
		\emph{truncation decomposition} of $T$.
	\end{itemize}
	
\end{definition}

Next, we quote the definition of weight structures from \cite{Bondarko18}.
\begin{definition} 
	A \emph{weight structure} on \cat T is a pair of 
	full subcategories $(\cat X,  \cat Y)$ satisfying the following properties:
	\begin{itemize}
		\item[w0.] \cat X and \cat Y are closed under direct summands.
		\item[w1.] $\Sigma^{-1} \cat X \subset \cat X$ and $ \Sigma \cat Y \subset \cat Y$.
		\item[w2.] $\cat X \perp \Sigma \cat Y$.
		\item[w3.] For any object $T\in \cat T$ there is a distinguished triangle
		\[ X \rar T \rar Y \rar \Sigma X \]
		where $X \in \cat X$ and $Y \in \Sigma \cat Y$.  The above triangle is called 
		a \emph{weight decomposition} of $T$.
	\end{itemize}
	
\end{definition}

Note that if $(\cat U, \cat V)$ is a t-structure on \cat T then $(\cat V,  \cat U)$ is a 
t-structure on $\cat T ^{op}$.  Similarly,  If $(\cat X, \cat Y)$ is a weight structure on \cat T then 
$(\cat Y, \cat X)$ is a weight structure on$\cat T ^{op}$.

For any subcategory \cat U of \cat T, we denote $\cat U ^{\perp}$ to be the full subcategory 
consisting of objects $B \in \cat T$ such that $\Hom (A, B) = 0$ for all $A \in \cat U$.  
Analogously we define $^{\perp} \cat U $ to be the full subcategory of objects  $B \in \cat T$ 
such that $\Hom (B, A) = 0$ for all $A \in \cat U$. 

\begin{definition}
	We say a t-structure $(\cat U,  \cat V)$ is \emph{compactly generated} if there is a set of 
	compact objects $\cat S$ such that  $\cat U = { }^{\perp}(\cat S ^{\perp})$.  A weight 
	structure $(\cat X,  \cat Y)$ is \emph{compactly generated} if there is a set of compact 
	objects $\cat S$ such that  $\cat X =  {}^{\perp}(\cat S ^{\perp})$.
\end{definition}

\begin{definition}
	A subcategory $\cat U$ of $\cat T$ is a \emph{preaisle} if it is closed under positive shifts
	and extensions.  Dually,  we say $\cat U$ is a \emph{copreaisle} of $\cat T$, if $\cat U$ is a 
	preaisle of $\cat T^{op}$.  
	
	A preaisle is called \emph{thick} if it is closed under direct
	summands.  We say a preaisle is \emph{cocomplete} if it is closed under coproducts
	in $\cat T$,  and \emph{complete} if it is closed under products.  Similarly, we define
	\emph{thick}, \emph{cocomplete} and \emph{complete} copreaisles. 
\end{definition}

For a t-structure $(\cat U,  \cat V)$ the subcategory $\cat U$ is a cocomplete preaisle
of $\cat T$,  and for a weight structure $(\cat X,  \cat Y)$ the subcategory $\cat X$ is a 
cocomplete copreaisle of $\cat T$.

We need the notion of stable derivators to formulate the next theorem but our requirement of 
the theory of derivators is the bare minimum.  We will not go into the precise lengthy definition 
here,  instead, we refer the reader to \cite[\S 2.1]{SP16} and references therein.  

\begin{theorem}[\text{ \cite[Theorem 4.5]{SP16}}]
\label{SP theorem}
	Let $\cat T = \mathbb{D}(e)$, where $\mathbb{D}$ is a stable derivator such that for each
	small category $I$,  $\mathbb{D}(I)$ has all small coproducts.  Then,
	\begin{itemize}
		\item[i.] There is a bijection between the set of compactly generated t-structures of 
		$\cat T$ and the set of thick preaisles of $\cat T^c$ given by
		
		\[ (\cat U, \cat V) \mapsto \cat U \cap \cat T^c \]
		\[ \cat P \mapsto (^{\perp} (\cat P ^{\perp}),  \Sigma \cat P ^{\perp}).\]

		\item[ii.] There is a bijection between the set of compactly generated weight structures of
		$\cat T$ and the set of thick copreaisles of $\cat T^c$ given by
		
		\[ (\cat X, \cat Y) \mapsto \cat X \cap \cat T^c \]
		\[ \cat P \mapsto (^{\perp} (\cat P ^{\perp}),  \Sigma^{-1} \cat P ^{\perp}).\]
	\end{itemize}
\end{theorem}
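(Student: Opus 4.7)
The plan is to prove both bijections in parallel, with part (ii) obtained as the formal dual of part (i) by passing to the opposite derivator; I concentrate on part (i). The forward direction $(\cat U, \cat V) \mapsto \cat U \cap \cat T^c$ is easily seen to be well-defined: $\cat U$ is closed under positive shifts and extensions by the t-structure axioms, and under direct summands as a standard consequence of the truncation axiom, while $\cat T^c$ is closed under the same operations (summands by idempotent completeness of derivator homotopy categories), so the intersection is a thick preaisle of $\cat T^c$.

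The main content is showing that the proposed inverse actually produces a t-structure. Fix a thick preaisle $\cat P \subseteq \cat T^c$ and set $\cat U := {}^{\perp}(\cat P^{\perp})$. Axioms t1 and t2 for the pair $(\cat U, \Sigma \cat P^{\perp})$ are formal. For axiom t3 I would construct the truncation triangle via an iterated cell-attachment tower: set $T_0 := T$ and inductively
\[ T_{n+1} := \mathrm{cone}\Bigl( \bigoplus_{P \in \cat P,\, f \in \Hom(P, T_n)} P \xrightarrow{(f)} T_n \Bigr), \]
then put $V := \mathrm{hocolim}_n T_n$ and $U := \mathrm{fib}(T \to V)$. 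Existence of this homotopy colimit, together with the sequential Milnor fibre sequence, is precisely what the stable derivator structure on $\mathbb{D}(\mathbb{N})$ delivers; this is the step where the derivator hypothesis is essential, since the argument relies on the Milnor sequence computing $\Hom(P, V)$ from the $\Hom(P, T_n)$. Compactness of each $P \in \cat P$ then gives $V \in \cat P^{\perp}$: any $P \to V$ factors through some $T_n$ and is annihilated at stage $n+1$. On the other hand, $U$ is a filtered colimit of iterated extensions of coproducts of objects of $\cat P$, hence lies in ${}^{\perp}(\cat P^{\perp}) = \cat U$.

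To confirm the bijection, the composition $(\cat U, \cat V) \mapsto \cat U \cap \cat T^c \mapsto \bigl({}^{\perp}((\cat U \cap \cat T^c)^{\perp}), \Sigma(\cat U \cap \cat T^c)^{\perp}\bigr)$ recovers $(\cat U, \cat V)$ precisely because the t-structure is compactly generated: the witnessing set can be enlarged to $\cat U \cap \cat T^c$ without altering ${}^{\perp}(-^{\perp})$. The reverse composition $\cat P \mapsto \cat U \cap \cat T^c$ is the genuine challenge: one must show every compact $C \in \cat U$ lies in $\cat P$. Applying the tower construction to $C$ itself, the vanishing $\Hom(C, V) = 0$ forces the map $C \to V$ to be zero, so $C$ is a direct summand of $U$. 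Compactness of $C$ then factors the splitting $C \to U$ through a finite stage $U_n$, and an inductive unwinding of the cell attachments places $U_n$ in the thick subcategory generated by $\cat P$ — which equals $\cat P$ because $\cat P$ is already thick and closed under extensions. Hence $C \in \cat P$.

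Part (ii) follows by applying the same argument to the opposite stable derivator: compactly generated weight structures on $\cat T$ correspond to compactly generated t-structures on $\cat T^{op}$, and thick copreaisles to thick preaisles, with the cell-attachment tower replaced by its co-tower. The step I expect to be the main obstacle, in either part, is the final compactness argument above — controlling the finite stage $U_n$ sharply enough that a summand of $U_n$ lands inside $\cat P$ and not merely in its extension closure under coproducts. This requires a careful interplay of compactness of $C$ (to collapse the sums of cell attachments to finite sub-sums) with the thickness hypothesis on $\cat P$.
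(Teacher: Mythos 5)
The paper does not actually prove this statement --- it is quoted verbatim from \cite[Theorem 4.5]{SP16} --- so there is no internal proof to compare against; I can only judge your argument on its own terms. Your outline of part (i) is the standard and essentially correct route: the cell-attachment tower, the Milnor sequence giving $V\in\cat P^{\perp}$, and the final compactness argument identifying $\cat U\cap\cat T^c$ with $\cat P$ all go through (the last step, which you rightly flag as delicate, is the usual Thomason--Neeman argument that a compact retract of a finite iterated extension of coproducts of objects of $\cat P$ already lies in the thick closure of $\cat P$).

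The genuine gap is in part (ii). Weight structures are not t-structures on the opposite category: passing to $\mathbb{D}^{op}$ turns coproducts into products, destroys compact generation, and in any case sends a weight structure to a weight structure (as the paper itself remarks after the definitions), so no formal duality reduces (ii) to (i). The correct construction of the weight decomposition uses the \emph{same} left-approximation tower by cells from $\cat P$, not a ``co-tower''; the real asymmetry between (i) and (ii) lies elsewhere. Concretely, to show $U=\mathrm{hocolim}_n\,U_n$ lies in ${}^{\perp}(\cat P^{\perp})$ one must prove $\Hom(U,W)=0$ for all $W\in\cat P^{\perp}$, and the Milnor sequence leaves an obstruction term ${\lim}^1\,\Hom(\Sigma U_n,W)$. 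In the t-structure case this vanishes because $\cat P$ is closed under \emph{positive} shifts, so each $\Sigma U_n$ is still built from $\cat P$; in the weight-structure case $\cat P$ is a copreaisle, closed only under \emph{negative} shifts, and the ${\lim}^1$ term does not vanish for free. This is precisely where \cite{SP16} invoke the stable derivator in an essential way (their Theorem 3.7, which the paper also quotes in its proof of Theorem \ref{t-structure}): the coherent tower in $\mathbb{D}(\mathbb{N})$ lets them describe ${}^{\perp}(\cat P^{\perp})$ as the class of summands of homotopy colimits of towers with subquotients in $\mathrm{Add}\,\cat P$, bypassing the failed ${\lim}^1$ argument. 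As written, your proposal does not close this step.
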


\section{Tensor weight and t-structures}

We recall the definition of tensor triangulated category from \cite[Definition A.2.1]{HPS97}.

\begin{definition}
	A \emph{tensor triangulated category} $(\cat T,  \otimes , \mathbf{1})$ is a triangulated 
	category with a compatible closed symmetric monoidal structure.  This means there is a 
	functor $- \otimes - \colon \cat T \times \cat T \to \cat T$ which is triangulated in both the 
	variables and satisfies certain compatibility conditions.  Moreover,  for each $B \in \cat T$ the 
	functor $- \otimes B$ has a right adjoint which we denote by $\mathscr{H}om (B, -) $.   The 
	functor $\mathscr{H}om (-,-)$ is triangulated in both the variables, and for any $A$,  $B$,  
	and $C$ in $\cat T$ we have natural isomorphisms $ \Hom ( A \otimes B ,  C) \rar 
	\Hom (A,  \mathscr{H}om (B, C))$. 
\end{definition}

\begin{definition}
\label{definition}
	Let $\cat T$ be a tensor triangulated category given with a preaisle $\aisle T$ satisfying 
	\[\aisle T \otimes \aisle T \subset \aisle T\text{ and }\mathbf{1} \in \aisle T.\]
	
	A preaisle  $\cat U$ of $\cat T$ is a \emph
	{\tensor preaisle (with respect to $\aisle T$)} if $\aisle T \otimes \cat U \subset \cat U$.  We 
	say a copreaisle $\cat X$ of 
	$\cat T$ is a \emph{\tensor copreaisle (with respect to $\aisle T$) } if 
	$\mathscr{H}om(\aisle T,  \cat X) \subset \cat X$.   A t-structure $(\cat U,  \cat V)$ is a called 
	a \emph{tensor t-structure} if $\cat U$ is a \tensor preaisle,  and a weight structure $(\cat X, 
	\cat Y)$ is a \emph{tensor weight structure} if $\cat X$ is a \tensor copreaisle. 
\end{definition}

Let \cat S $\subset$ \cat T be a class of objects.  We denote the smallest cocomplete preaisle 
containing \cat S by \preaisle S and call it the \emph{cocomplete preaisle generated by \cat S.}  
 If $\cat T$ does not have coproducts we denote $\preaisle S$ to be the smallest 
preaisle containing $\cat S$.

\begin{lemma}
\label{tensor generator 1}
	 Let \aisle {\cat T} be generated by a set of objects \cat K,  that is, \aisle {\cat T} =
	 \preaisle{\cat K}.  Then 
	\begin{itemize}
		\item[i.] a cocomplete preaisle \cat U of \cat T is a $\otimes$-preaisle if and only if 
	 	\cat K $\otimes$ \cat U $\subset \cat U$.
	 	
	 	\item[ii.] a complete copreaisle $\cat X$ of $\cat T$ is a \tensor copreaisle if and only if 
	 	$\mathscr{H}om(\cat K,  \cat X) \subset \cat X$.
\end{itemize}		 
	 
\end{lemma}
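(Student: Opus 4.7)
The plan is to prove both parts by the same ``extend from generators'' argument: each forward implication is immediate from $\cat K \subset \aisle{\cat T}$, while for each converse I exhibit, for a fixed $U \in \cat U$ (respectively $X \in \cat X$), an auxiliary full subcategory of $\cat T$ that is itself a cocomplete preaisle and contains $\cat K$. By the minimality of $\preaisle{\cat K} = \aisle{\cat T}$, this auxiliary subcategory must then contain $\aisle{\cat T}$, which is exactly the desired inclusion.

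For part (i) the natural candidate is
\[ \cat C_U \colonequals \{ T \in \cat T : T \otimes U \in \cat U \}. \]
Since $- \otimes U$ is triangulated and, as a left adjoint to $\mathscr{H}om(U, -)$, preserves coproducts, and since $\cat U$ is itself closed under positive shifts, extensions, and coproducts, one verifies directly that $\cat C_U$ inherits all three closure properties. The hypothesis $\cat K \otimes \cat U \subset \cat U$ reads $\cat K \subset \cat C_U$, and letting $U$ vary over $\cat U$ gives $\aisle{\cat T} \otimes \cat U \subset \cat U$.

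For part (ii) I would take the dual candidate
\[ \cat C_X \colonequals \{ T \in \cat T : \mathscr{H}om(T, X) \in \cat X \}. \]
The adjunction yields natural isomorphisms $\mathscr{H}om(\Sigma T, X) \cong \Sigma^{-1} \mathscr{H}om(T, X)$ and $\mathscr{H}om(\bigoplus_i T_i, X) \cong \prod_i \mathscr{H}om(T_i, X)$, and $\mathscr{H}om(-, X)$ sends triangles in its first variable to triangles in its second. From these it is routine to verify that $\cat C_X$ is closed under positive shifts, extensions, and coproducts. The single subtle point --- and what I would flag as the main content of the argument --- is that closure of $\cat C_X$ under coproducts forces $\cat X$ to be closed under \emph{products}, which is precisely where the completeness hypothesis on $\cat X$ enters and reconciles the apparent mismatch between the cocomplete preaisle $\cat C_X$ on the source side and the copreaisle $\cat X$ on the target side via the contravariance of $\mathscr{H}om(-, X)$. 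Otherwise the proof is a standard generator argument, and the asymmetry in the (co)completeness hypotheses between parts (i) and (ii) is already foreshadowed by Definition \ref{definition}.
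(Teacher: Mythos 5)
Your argument is correct and is essentially the paper's own proof: both verify the forward directions trivially and prove the converses by showing that the auxiliary subcategory of objects acting well on $\cat U$ (resp.\ $\cat X$) is a cocomplete preaisle containing $\cat K$, hence contains $\aisle{\cat T}$ by minimality, with completeness of $\cat X$ entering in part (ii) exactly as you flag (coproducts in the first variable of $\mathscr{H}om(-,X)$ become products in $\cat X$). The only cosmetic difference is that the paper takes the auxiliary subcategory inside $\aisle{\cat T}$ and quantifies over all of $\cat U$ at once rather than fixing one object $U$, which changes nothing.
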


\begin{proof}
Part (i).
	Suppose \cat K $\otimes$ \cat U $\subset \cat U$.  We define  $\cat B$ = $\{ X \in \aisle T 
	\mid X \otimes \cat U \subset \cat U\}$.  Since $\cat U$ is a cocomplete preaisle we can 
	observe that $\cat B$ is also a cocomplete preaisle.  Now, by our assumption $\cat K \subset 
	\cat B$ so we get $\aisle T \subset \cat B$ which proves $\cat U$ is \tensor preaisle.  The 
	converse is immediate.  Part (ii).  Let $\cat B = \{ X \in \aisle T \mid \mathscr{H}om ( X ,  \cat X)  
	\subset \cat X\}$.  Since $\cat X$ is a copreaisle we can see that $\cat B$ is a preaisle.
	 Completeness of $\cat X$ implies $\cat B$ is cocomplete.  Now,  by following
	 a similar argument as in (i) we get $\cat X$ is \tensor copreaisle.
	
\end{proof}

An immediate consequence of the above lemma is  if $\aisle T = \langle \mathbf{1} \rangle 
^{\leq 0}$ then every cocomplete preaisle of $\cat T$ is a \tensor preaisle and  every complete 
copreaisle of $\cat T$ is a \tensor copreaisle.  In particular,  for a commutative ring $R$, all
cocomplete preaisle and complete copreaisle of $\mathbf{D}(R)$ satisfy the tensor condition.

\begin{definition}
We say an object $X \in \cat T$ is \emph{rigid} or \emph{strongly dualizable} if for each
$Y \in \cat T$ the natural map 
$\mu: \mathscr{H}om(X, \mathbf{1}) \otimes Y \rar \mathscr{H}om(X,  Y)$ is an isomorphism.
	A tensor triangulated category $(\cat T,  \otimes , \mathbf{1})$ is \emph{rigidly compactly
	generated} if the following conditions hold:
	\begin{itemize}
		\item[i.] \cat T is compactly generated;
		\item[ii.] $\mathbf{1}$ is compact;
		\item[iii.] every compact object is rigid. 
	\end{itemize}
\end{definition}

Let $\cat T$ be a rigidly compactly generated tensor triangulated category.  For such a 
triangulated category the tensor product on $\cat T$ restricts to $\cat T^c$ therefore 
$(\cat T^c,  \otimes , \mathbf{1})$ is also a tensor triangulated category.  Suppose $\cat T$ is 
given with a preaisle $\aisle T$ satisfying the condition of Definition \ref{definition} then so 
does the preaisle $\cat T^c \cap \aisle T$ of $\cat T^c$.  So we can define \tensor preaisles and 
\tensor copreaisles of $\cat T^c$ with respect to $\cat T^c \cap \aisle T$.

\begin{lemma}
\label{preaisle and copreaisle}
	Let $\cat T$ be a rigidly compactly generated tensor triangulated category given with a 
	preaisle $\aisle T$ satisfying the condition of Definition \ref{definition}.
	
	Then, there is a one-to-one  correspondence between the set of \tensor preaisles and the set 
	of \tensor copreaisles of $\cat T^c$.
\end{lemma}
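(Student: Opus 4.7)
The natural candidate for the bijection is the Spanier–Whitehead style duality $D \colon (\cat T^c)^{op} \to \cat T^c$ defined by $D(-) \coloneqq \mathscr{H}om(-, \mathbf{1})$. Since every compact object of $\cat T$ is rigid, the canonical biduality map $X \to D(D(X))$ is an isomorphism for every $X \in \cat T^c$, so $D$ is a contravariant self-equivalence of $\cat T^c$. Being triangulated and contravariant, $D$ exchanges $\Sigma$ with $\Sigma^{-1}$ and sends a distinguished triangle $A \to B \to C \to \Sigma A$ to $D(C) \to D(B) \to D(A) \to \Sigma D(C)$. In particular, $D$ takes (closure under positive shifts and extensions) to (closure under negative shifts and extensions), i.e.\ preaisles go to copreaisles and conversely.

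The plan is then to define
\[\Phi(\cat P) \coloneqq \{X \in \cat T^c \mid D(X) \in \cat P\}\]
for a \tensor preaisle $\cat P$ of $\cat T^c$ (and symmetrically $\Psi$ for \tensor copreaisles), and to show that $\Phi$ and $\Psi$ are mutually inverse bijections between the two collections. By the previous paragraph, $\Phi(\cat P)$ is automatically a copreaisle of $\cat T^c$; the content is that the tensor hypothesis is preserved. For this I will use the key identity, valid when $K$ is rigid:
\[\mathscr{H}om(K, D(P)) \cong \mathscr{H}om(K \otimes P,\, \mathbf{1}) \cong D(K \otimes P),\]
which follows from the tensor–Hom adjunction together with rigidity of $K$. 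Given $K \in \cat T^c \cap \aisle T$ and $X \in \Phi(\cat P)$, so that $D(X) \in \cat P$, the preaisle condition $K \otimes D(X) \in \cat P$ combined with the identity above yields $D(\mathscr{H}om(K,X)) \cong K \otimes D(X) \in \cat P$, that is $\mathscr{H}om(K,X) \in \Phi(\cat P)$; so $\Phi(\cat P)$ is a \tensor copreaisle. The symmetric argument for $\Psi$ is identical.

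Finally, because $D \circ D \cong \mathrm{id}_{\cat T^c}$, the assignment $\Phi(\cat P)$ equals (up to isomorphism in $\cat T^c$) the class $\{D(P) : P \in \cat P\}$, and $\Psi \circ \Phi = \mathrm{id}$ on the level of subcategories closed under isomorphism, with the opposite composition also equal to the identity. This gives the desired bijection.

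The only potentially delicate point is the verification that the key identity above really gives a natural isomorphism for an arbitrary rigid $K$ and arbitrary $P$, not merely rigid $P$; this is where the hypothesis \emph{rigidly compactly generated} is essential, since it guarantees that every element of $\cat T^c \cap \aisle T$ is strongly dualizable, so the natural map $\mu$ in the definition of rigidity applies. Once that identification is in hand, all other steps are formal manipulations with shifts and extensions together with the involutive nature of $D$ on $\cat T^c$.
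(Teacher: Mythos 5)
Your proposal is correct and takes essentially the same route as the paper: both use the duality $D(-)=\mathscr{H}om(-,\mathbf{1})$ on $\cat T^c$ (the paper writes the image $\cat U^*$ where you write the preimage $\Phi(\cat P)$, which agree by biduality of compacts) to swap preaisles and copreaisles, and both verify that the tensor conditions are exchanged via the adjunction isomorphism $\mathscr{H}om(K,\mathscr{H}om(Y,\mathbf{1}))\cong\mathscr{H}om(K\otimes Y,\mathbf{1})$ together with rigidity. The only cosmetic difference is that the paper cites \cite[Lemma 4.9]{SP16} for the preaisle/copreaisle exchange where you rederive it from $D$ being a contravariant triangulated self-equivalence.
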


\begin{proof}

Let $\cat U$ be a full subcategory of $\cat T ^c$.  We denote by $\cat U^*$ the full 
subcategory \[ \cat U ^* = \{ X \in \cat T^c \mid X \cong \mathscr{H}om ( Y,  \mathbf{1}) \text{ 
for some } Y \in \cat U \}. \]

The assignment $\cat U \mapsto \cat U^*$ induces an equivalence between the preaisles and 
copreaisles of $\cat T ^c$; see \cite[Lemma 4.9]{SP16}.  We only need to show that the 
above assignment preserves the tensor condition.

Let $\cat U$ be a preaisle of $\cat T^c$, $X \in \cat U^*$ and $T \in \cat T^c \cap \cat T ^{\leq 
0}$.  We have \[ \mathscr{H}om(T, X) \cong \mathscr{H}om (T,  \mathscr{H}om (Y, \mathbf{1}))
\cong \mathscr{H}om (T \otimes Y , \mathbf{1}).\]

If we assume $\cat U$ is a \tensor preaisle then $T \otimes Y \in \cat U$.  Hence  
$ \mathscr{H}om(T, X) \in \cat U^*$,  this proves $\cat U^* $ is a \tensor copreaisle.  

Now,  suppose $\cat U$ is a copreaisle.  Let $X \in \cat U^*$ and $T \in \cat T^c \cap \cat T ^{\leq 0}$.  Then,

\begin{align*}
 T \otimes X
 & \cong T \otimes \mathscr{H}om(Y,\mathbf{1})\\
 & \cong \mathscr{H}om(Y,T)\\
 & \cong \mathscr{H}om( Y \otimes \mathscr{H}om(T, \mathbf{1}),  \mathbf{1})\\
 & \cong \mathscr{H}om( \mathscr{H}om(T,Y), \mathbf{1}).\\
\end{align*}

If we assume $\cat U$ is a \tensor copreaisle then $\mathscr{H}om(T,Y) \in \cat U$.  We get,
$T \otimes X \in \cat U^*$,  which proves $\cat U^*$ is a \tensor preaisle.

\end{proof}

\begin{definition}
A preaisle \cat U is \emph{compactly generated} if $\cat U = \preaisle S$ for a set of compact 
objects \cat S.
\end{definition}

\begin{definition}
\label{*}
	We say a triangulated category $\cat T$ has the property $(*)$ if:
	\begin{itemize}
		\item[i.] $\cat T$ is rigidly compactly generated;
		\item[ii.] $\cat T$ has a preaisle $\aisle T$ satisfying $\aisle T \otimes \aisle T \subset 
		\aisle T	\text{ and }\mathbf{1} \in \aisle T$;
		\item[iii.] $\aisle T$ is compactly generated,  that is,  $\aisle T = \langle \cat T^c \cap
		 \aisle T \rangle ^{\leq 0}$.
	\end{itemize}	 
\end{definition}

For a triangulated category $\cat T$ having the property $(*)$, we define a weaker notion than 
\tensor (co)preaisle.

\begin{definition} Let $\cat T$ have the property $(*)$.

A preaisle  $\cat U$ of $\cat T$ is a \emph{$\otimes ^c$-preaisle} if for any $T \in \cat T^c 
\cap \aisle T$ and $U \in \cat U$ we have $T \otimes U \in \cat U$.  Similarly a copreaisle 
$\cat X$ of $\cat T$ is a \emph{$\otimes ^c$-copreaisle} if for any $T \in \cat T^c \cap \aisle T
$ and $X \in \cat X$ we have $\mathscr{H}om(T,  X) \in \cat X$. 

A t-structure $(\cat U,  \cat V)$ on $\cat T$ is a \emph{$\otimes ^c$-t-structure} if $\cat U$ is 
a $\otimes ^c$-preaisle and a weight structure $(\cat X, \cat Y)$ on $\cat T$ is a 
\emph{$\otimes ^c$-weight structure} if $\cat X$ is a $\otimes ^c$-copreaisle. 
\end{definition}

\begin{remark} 
\label{weak}
This weaker notion gives something new only for preaisles(resp. copreaisles) which are 
not cocomplete(resp. complete) since by  Lemma  \ref{tensor generator 1} it can easily be 
observed that for $\cat T$ having the property $(*)$: (i) every cocomplete 
$\otimes ^c$-preaisles of $\cat T$ is a $\otimes$-preaisle of $\cat T$,  and (ii) every complete 
$\otimes ^c$-copreaisle of $\cat T$ is a \tensor copreaisle of $\cat T$.
\end{remark}

With this weaker notion,  we now prove the tensor analogue of Theorem \ref{SP theorem}.

\begin{theorem}
\label{t-structure}
	Let $\cat T$ have the property $(*)$ (see Definition \ref{*}) and $\cat T = 
	\mathbb{D}(e)$, where $\mathbb{D}$ is a stable derivator such that for each
	small category $I$,  $\mathbb{D}(I)$ has all small coproducts.  Then,
	
	\begin{itemize}
		\item[i.] There is a bijective correspondence between the set for compactly generated 
		tensor t-structures of $\cat T$ and the set of thick \tensor preaisles of $\cat T^c$
		given by
		\[ (\cat U, \cat V) \mapsto \cat U \cap \cat T^c \]
		\[ \cat P \mapsto (^{\perp} (\cat P ^{\perp}),  \Sigma \cat P ^{\perp}).\]
		
		\item[ii. ]There is a bijective correspondence between the set for compactly generated 
		$\otimes ^c$-weight structures of $\cat T$ and the set of thick \tensor copreaisles of $
		\cat T^c$ given by
		\[ (\cat X, \cat Y) \mapsto \cat X \cap \cat T^c \]
		\[ \cat P \mapsto (^{\perp} (\cat P ^{\perp}),  \Sigma^{-1} \cat P ^{\perp}).\]
	\end{itemize}
\end{theorem}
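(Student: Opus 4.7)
The plan is to deduce the theorem from Theorem \ref{SP theorem} by checking that the bijection established there restricts along the tensor conditions on both sides. Concretely, for (i) I would show that $\cat U \cap \cat T^c$ is a thick $\otimes$-preaisle whenever $(\cat U,\cat V)$ is a tensor t-structure, and conversely that ${}^\perp(\cat P^\perp)$ is a $\otimes$-preaisle of $\cat T$ whenever $\cat P \subset \cat T^c$ is a thick $\otimes$-preaisle; part (ii) is the parallel statement with copreaisles replacing preaisles and the $\otimes^c$-notion replacing the $\otimes$-notion. Once both directions are verified, bijectivity follows formally from Theorem \ref{SP theorem}.

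The forward direction ($\cdot \cap \cat T^c$) is essentially automatic: in a rigidly compactly generated category, $\cat T^c$ is closed under both $\otimes$ and internal $\mathscr{H}om$, using the identification $\mathscr{H}om(T,X) \cong \mathscr{H}om(T,\mathbf{1}) \otimes X$ for rigid $T \in \cat T^c$. Hence the tensor condition on $\cat U$ (or $\cat X$) descends immediately to its intersection with $\cat T^c$.

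The real content lies in the backward direction, which is where property $(*)$ is essentially used. For (i), starting from a thick $\otimes$-preaisle $\cat P \subset \cat T^c$ and setting $\cat U = {}^\perp(\cat P^\perp)$, the assumption $\aisle T = \langle \cat T^c \cap \aisle T\rangle^{\leq 0}$ from property $(*)$ together with Lemma \ref{tensor generator 1}(i), applied to the cocomplete preaisle $\cat U$, reduces the desired inclusion $\aisle T \otimes \cat U \subseteq \cat U$ to checking $T \otimes U \in \cat U$ only for $T \in \cat T^c \cap \aisle T$. From the adjunctions
\[\Hom(T \otimes U, V) \cong \Hom(U, \mathscr{H}om(T, V))\text{ and }\Hom(P, \mathscr{H}om(T,V)) \cong \Hom(P \otimes T, V),\]
together with $P \otimes T \in \cat P$ (the tensor hypothesis on $\cat P$) and $V \in \cat P^\perp$, the claim falls out immediately. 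Part (ii) follows the same template but requires an extra rigidity rewrite $\mathscr{H}om(T,X) \cong \mathscr{H}om(T,\mathbf{1}) \otimes X$ to translate the copreaisle tensor condition into a form amenable to the same $\perp$-chasing argument; the input this time is $\mathscr{H}om(T,P) \in \cat P$ from the copreaisle hypothesis on $\cat P$.

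The main, and really only, obstacle I anticipate is the bookkeeping in the backward direction of (ii), where $T$ and its dual $\mathscr{H}om(T,\mathbf{1})$ have to be tracked carefully through the chain of $\otimes$--$\mathscr{H}om$ adjunctions. Conceptually, the reason statement (ii) is phrased via the weaker $\otimes^c$-notion rather than the full $\otimes$-notion is that the class $\cat X = {}^\perp(\cat P^\perp)$ is always cocomplete but need not be complete; by Remark \ref{weak}, the $\otimes^c$-condition is precisely the amount of tensoriality one can expect to recover from a compact copreaisle $\cat P$ in this setting.
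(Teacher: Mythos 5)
Your proposal is correct, and for part (ii) it takes a genuinely different route from the paper. For part (i) the two arguments are close in spirit: both use Remark \ref{weak} (equivalently Lemma \ref{tensor generator 1}(i) together with property $(*)$(iii) and the cocompleteness of ${}^{\perp}(\cat P^{\perp})$) to reduce the full condition $\aisle T \otimes \cat U \subset \cat U$ to the compact generators $\cat T^c \cap \aisle T$; the paper then finishes by observing that $\cat B = \{X \in {}^{\perp}(\cat P^{\perp}) \mid (\cat T^c \cap \aisle T)\otimes X \subset {}^{\perp}(\cat P^{\perp})\}$ is a cocomplete preaisle containing $\cat P$ and invoking the minimality of ${}^{\perp}(\cat P^{\perp})$, whereas you finish by the adjunction $\Hom(T\otimes U, V) \cong \Hom(U, \mathscr{H}om(T,V))$ and the check that $\mathscr{H}om(T,V) \in \cat P^{\perp}$ — both are valid. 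The real divergence is in part (ii): the paper appeals to the structural description of ${}^{\perp}(\cat P^{\perp})$ from \cite[Theorem 3.7]{SP16} as summands of homotopy colimits of towers with cones in $\mathrm{Add}\,\cat P$, and then uses that $\mathscr{H}om(T,-)$ preserves coproducts for compact $T$ to push that description through. Your orthogonality chase — rewriting $\mathscr{H}om(T,X) \cong \mathscr{H}om(T,\mathbf{1})\otimes X$ by rigidity, then $\Hom(\mathscr{H}om(T,\mathbf{1})\otimes X, V) \cong \Hom(X, \mathscr{H}om(\mathscr{H}om(T,\mathbf{1}), V))$, and verifying $\mathscr{H}om(\mathscr{H}om(T,\mathbf{1}),V) \in \cat P^{\perp}$ from $\Hom(P \otimes \mathscr{H}om(T,\mathbf{1}), V) \cong \Hom(\mathscr{H}om(T,P), V) = 0$ — avoids the tower description entirely and needs only the tensor--hom adjunction plus rigidity of compacts. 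It is the more elementary argument, and it also makes transparent why only the $\otimes^c$-condition (rather than the full tensor copreaisle condition) can be recovered on the weight side, which you correctly attribute to the failure of completeness of ${}^{\perp}(\cat P^{\perp})$ as in Remark \ref{remark 2}. The paper's route, by contrast, yields the slightly stronger structural information that $\mathscr{H}om(T,-)$ preserves the explicit cellular presentation of objects of $\cat X$.
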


Before proving the theorem, we will make some comments about the lack of symmetry in the
above statement:
\begin{remark}
\label{remark 1}
	 It is easy to observe that given a subcategory $\cat P$ the subcategory
	${}^{\perp} (\cat P ^{\perp})$ is always cocomplete,  that is,  closed under coproducts.  
	Therefore,  by Remark \ref{weak} saying ${}^{\perp} (\cat P ^{\perp})$ is a $\otimes ^c$-
	preaisle of $\cat T$ is equivalent to saying it is a \tensor preaisle.
	
\end{remark}

\begin{remark}
\label{remark 2}
	As $\cat T$ has coproducts and is compactly generated, by Brown representability it has
	products.  However,  it is not clear to us whether ${}^{\perp} (\cat P ^{\perp})$ is closed 
	under products, hence unlike the preaisle case, we can not claim it is \tensor copreaisle.
\end{remark}

\begin{proof}[Proof of Theorem \ref{t-structure}]
	Part(i).   It has already been shown in \cite[Theorem 4.5(i)]{SP16} that the above assignments 
	are bijections between the set of compactly generated t-structures of $\cat T$ and the set 
	of thick preaisles of $\cat T^c$.  We only need to show that the assignments preserve the 
	tensor conditions. 
	   
	From the definition of \tensor preaisle, it is easy to observe that if $\cat U$ is a 
	\tensor preaisle of $\cat T$ then $\cat U \cap \cat T^c$ is a \tensor preaisle of $\cat T^c$.
	Suppose $\cat P$ is a \tensor preaisle of $\cat T^c$.  Since 
	$^{\perp} (\cat P ^{\perp})$ is a cocomplete preaisle of $\cat T$ by Remark \ref{weak} 
	it is enough to show $^{\perp} (\cat P ^{\perp})$ is a $\otimes ^c$-preaisle of $\cat T$.  
	Let $\cat B = \{ X \in$ $  ^{\perp} (\cat P ^{\perp})  \mid (\cat T^c \cap \aisle T) \otimes X
	\subset$ $ ^{\perp} (\cat P ^{\perp}) \}$.  We note that $\cat B$ is a cocomplete preaisle
	containing $\cat P$.  Since $^{\perp} (\cat P ^{\perp})$ is the smallest cocomplete preaisle 
	containing $\cat P$ by \cite[Lemma 1.9]{DS22} we get $\cat B = $ $^{\perp} (\cat 
	P^{\perp})$.  
	
	Part(ii).  In view of \cite[Theorem 4.5(ii)]{SP16},  again we only need to show that the 
	assignments 
	preserve the appropriate tensor conditions.  If $\cat X$ is a $\otimes ^c$-copreaisle of 
	$\cat T$ then it is easy to observe that $\cat X \cap \cat T^c$ is a \tensor copreaisle of 
	$\cat T^c$.  Suppose $\cat P$ is a \tensor copreaisle of $\cat T^c$ we need to show that 
	$^{\perp} (\cat P ^{\perp})$ is a $\otimes ^c$-copreaisle of $\cat T$.  By 
	\cite[Theorem 3.7]{SP16} an object $A \in \cat T$ belongs to $ ^{\perp} (\cat P ^{\perp})$ 
	if and only if $A$ is a summand of a homotopy colimit of a sequence
	\begin{center}

	\begin{tikzcd}
	 &0 = Y_0 \arrow[r, "f_0"] & Y_1 \rar ["f_1"] & Y_2 \rar ["f_2"] &\cdots 
	\end{tikzcd}
	\end{center}
	
	where each $f_i$ occurs in a triangle $ Y_i \rar Y_{i+1} \rar S_i \rar \Sigma Y_i$ with
	$S_i \in \text{Add }\cat P$.  First, we observe that for any compact object $T$ the functor
	$\mathscr{H}om( T, -)$ preserves small coproducts therefore $\mathscr{H}om( T, -)$ takes 
	homotopy sequences to homotopy sequences.  Since $\cat P$ is \tensor copreaisle of
	$\cat T^c$ for any $T \in \cat T^c \cap \aisle T$ we have $\mathscr{H}om(T,  S_i) \in 
	\text{Add } 
	\cat P$.  Thus applying \cite[Theorem 3.7]{SP16} again we get $\mathscr{H}om( T, A) \in $
	 $^{\perp} (\cat P ^{\perp}).$

\end{proof}

\section{The classification theorem for weight structures}

Let $X$ be a Noetherian separated scheme.  $\deriveq X$ denotes the derived category of  
complexes of quasi coherent $\CO _X$-modules.
The derived category $( \deriveq X, \otimes ^{L} _{\CO _X},\CO_X)$ is a tensor triangulated 
category with the derived tensor product $\otimes ^{L} _{\CO _X}$  and the structure sheaf 
$\CO_X$ as the unit.  The full subcategory of complexes whose cohomologies vanish in 
positive degree $\mathbf{D}^{\leq 0}(\mathrm{Qcoh} X)$ is a preaisle of $\deriveq X$ 
satisfying the conditions of Definition \ref{definition}.  \emph{We define the \tensor preaisles
and \tensor copreaisles of $\deriveq X$ with respect to $\mathbf{D}^{\leq 0}(\mathrm{Qcoh} 
X)$}.  Similarly,  the \tensor preaisles and \tensor copreaisles of $\perfect X$ are defined 
with respect to $ \mathrm{Perf} ^{\leq 0} (X)$.  Note that $\deriveq X$ has the property $(*)$ 
(see Definition \ref{*}),  so we can define $\otimes ^c$-(co)preaisles of $\deriveq X$.

\begin{definition}
	
\label{D Thomason subset}
	A subset $Z$ is a \emph{specialization closed} subset of $X$ if for each $x \in Z$ the 
	closure of the singleton set $\{x\}$ is contained in $Z$, that is, $\bar{\{x\}} \subset Z$.  Note 
	that a specialization closed subset is a union of closed subsets of $X$.  

	A subset $Y$ is a \emph{Thomason} subset of $X$ if $Y = \bigcup _{\alpha} Y_{\alpha}$ is a 
	union of closed subsets $Y_{\alpha}$ such that $X \setminus Y_{\alpha}$ is quasi compact. 
	Note that if $X$ is Noetherian then the two notions coincide.

\end{definition}

\begin{definition}
	A \emph{Thomason filtration} of $X$ is a map $\phi : \Z \rar 2^X$ such that 
	$\phi(i)$ is a Thomason subset of $X$ and $\phi(i) \supset \phi (i+1)$ for all $i \in \Z$.
\end{definition}

In our earlier work we have mentioned without proof (see \cite[Remark 4.13]{DS22}) about the 
following result,  here we explicitly state it for future reference.  This is a generalization 
of Thomason's classification  \cite[Theorem 3.15]{Thomason} of \tensor ideals to \tensor 
preaisles of $\perfect X$, for separated Noetherian scheme $X$.

\begin{proposition}
\label{tensor preaisle}
Let $X$ be a separated Noetherian scheme.  The assignment sending a Thomason
filtration
	$ \phi$ to $ \cat S_{\phi} = \{ E \in \perfect X \mid \Supp (H^i(E)) \subset \phi (i) \} $
	 provides a one-to-one correspondence between the following sets:

\begin{itemize}
	\item[i.] Thomason  filtrations of $X$;
	\item[ii.] Thick \tensor preaisles of $\perfect X$.
	
\end{itemize}

\end{proposition}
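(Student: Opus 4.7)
The plan is to obtain the bijection by composing two already-established classifications. On one side, our earlier work \cite{DS22} provides a bijection between Thomason filtrations $\phi$ of $X$ and compactly generated tensor t-structures on $\deriveq{X}$, with $\phi$ corresponding to the aisle
\[ \cat U_\phi = \{A \in \deriveq{X} \mid \Supp H^i(A) \subset \phi(i) \text{ for all } i\}. \]
On the other side, Theorem \ref{t-structure}(i) restricts the S\v{t}ov\'{\i}\v{c}ek--Posp\'{\i}\v{s}il bijection to produce a bijection between compactly generated tensor t-structures of $\deriveq{X}$ and thick \tensor preaisles of $\perfect{X}$, via $(\cat U, \cat V) \mapsto \cat U \cap \perfect{X}$.

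Composing these, $\phi \mapsto \cat U_\phi \cap \perfect{X}$ is a bijection between Thomason filtrations of $X$ and thick \tensor preaisles of $\perfect{X}$. I would then verify that this composite agrees with the explicit formula in the statement: by definition, $\cat U_\phi \cap \perfect{X}$ is the set of perfect complexes $E$ with $\Supp H^i(E) \subset \phi(i)$ for every $i$, which is precisely $\cat S_\phi$. Here I use the fact that on a separated Noetherian scheme the compact objects of $\deriveq{X}$ coincide with $\perfect{X}$.

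Before invoking this composition argument one should directly check that $\cat S_\phi$ is a thick \tensor preaisle. Closure under direct summands, $\Sigma$, and extensions is standard from the long exact sequence of cohomology combined with the inclusion $\phi(i+1) \subset \phi(i)$. The tensor condition uses a K\"unneth-type spectral sequence: for $F \in \mathrm{Perf}^{\leq 0}(X)$ and $E \in \cat S_\phi$, one has
\[ \Supp H^i(E \otimes F) \subset \bigcup_{p+q=i,\, q \leq 0} \Supp H^p(E) \subset \bigcup_{p \geq i} \phi(p) = \phi(i). \]

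The main technical content has thus been absorbed into \cite{DS22} and Theorem \ref{t-structure}. The principal obstacle, if any, is book-keeping: verifying that the aisle $\cat U_\phi$ from \cite{DS22} really does correspond to $\cat S_\phi$ under intersection with compact objects, which reduces, as noted, to the identification of compact objects with perfect complexes on the Noetherian separated scheme $X$, together with the observation that both prior bijections are natural enough for the composite to assume the explicit cohomological support form displayed in the statement.
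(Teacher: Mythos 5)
Your proposal is correct and follows essentially the same route as the paper: compose the bijection of \cite[Theorem 4.11]{DS22} between Thomason filtrations and compactly generated tensor t-structures of $\deriveq{X}$ with the restriction map of Theorem \ref{t-structure}(i), and identify $\cat U_\phi \cap \perfect{X}$ with $\cat S_\phi$. The extra direct verification that $\cat S_\phi$ is a thick \tensor preaisle (via the K\"unneth-type support estimate) is a sound consistency check but is not needed, since the composite bijection already lands in that set.
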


\begin{proof}
	In \cite[Theorem 4.11]{DS22} we have shown that sending $\phi$ to 
		\begin{center}
			$ \cat U_{\phi} = \{ E \in \deriveq X \mid \Supp (H^i(E)) \subset \phi (i) \} $
		\end{center}
	 provides a bijection between the set of Thomason filtrations and the set of 
	 compactly generated tensor t-structure of $\deriveq X$.  From part(i) of Theorem 
	\ref{t-structure}, we conclude that the above assignment provides a bijection between  
	 Thomason filtrations of $X$ and thick \tensor preaisles of $\perfect X$. 
\end{proof}

\begin{theorem}
\label{tensor weight structure}
Let $X$ be a separated Noetherian scheme.  There is a one-to-one correspondence between
the following sets:

\begin{itemize}
	\item[i.]  Thomason Filtrations of $X$;
	\item[ii.] Compactly generated $\otimes ^c$-weight structures of 
	$\deriveq X$.
	
The assignment is given by 
\[\phi \mapsto (\cat A_{\phi} , \cat B_{\phi})\]
	where\\
	$\cat B _{\phi} = \{ B \in \deriveq X \mid \Hom (\CO_X,  S \otimes ^{L} _{\CO _X} B) = 0
	\text{ for all } S \in \cat S_\phi \},$\\
	$\cat S_\phi = \{S \in \perfect X \mid \Supp (H^iS) \subset \phi (i) \},$ and\\
	$\cat A_\phi = \{ A \in \deriveq X \mid \Hom (A , B) = 0 \text{ for all } B \in \cat B_{\phi} \}.$
\end{itemize}

\end{theorem}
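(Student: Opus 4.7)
The plan is to string together three previously-established bijections. First, Proposition~\ref{tensor preaisle} gives a bijection between Thomason filtrations of $X$ and thick \tensor preaisles of $\perfect X$ via $\phi \mapsto \cat S_\phi$. Second, Lemma~\ref{preaisle and copreaisle} provides a bijection between thick \tensor preaisles and thick \tensor copreaisles of $\perfect X$ via the duality $\cat U \mapsto \cat U^{*} = \{\mathscr{H}om(Y, \CO_X) \mid Y \in \cat U\}$, which applies because $\deriveq X$ is rigidly compactly generated. Third, Theorem~\ref{t-structure}(ii) assigns to a thick \tensor copreaisle $\cat P$ of $\perfect X$ the compactly generated $\otimes^c$-weight structure $({}^{\perp}(\cat P^{\perp}),\, \Sigma^{-1}\cat P^{\perp})$ on $\deriveq X$. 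Composing these three bijections yields the desired correspondence between Thomason filtrations of $X$ and compactly generated $\otimes^c$-weight structures of $\deriveq X$.

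It then remains to verify that this composition agrees with the explicit form $\phi \mapsto (\cat A_\phi, \cat B_\phi)$ stated in the theorem. Setting $\cat P := (\cat S_\phi)^{*}$, the essential task is to identify the subcategory $\cat P^{\perp}$ inside $\deriveq X$ with $\cat B_\phi$. For this I would exploit the rigidity of perfect complexes on a separated Noetherian scheme: for each $S \in \cat S_\phi$ and $B \in \deriveq X$, there is a chain of natural isomorphisms
\[
\Hom\bigl(\mathscr{H}om(S,\CO_X),\, B\bigr)
\;\cong\; \Hom\bigl(\CO_X,\, \mathscr{H}om(\mathscr{H}om(S,\CO_X),\, B)\bigr)
\;\cong\; \Hom\bigl(\CO_X,\, S \otimes^{L}_{\CO_X} B\bigr),
\]
obtained from the tensor--hom adjunction together with the rigidity identification $S \otimes^{L}_{\CO_X} B \cong \mathscr{H}om(\mathscr{H}om(S,\CO_X),\, B)$ (valid because $S$, and hence its $\CO_X$-dual, is strongly dualizable). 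Hence $B \in \cat P^{\perp}$ if and only if $B \in \cat B_\phi$, and then $\cat A_\phi = {}^{\perp}\cat B_\phi$ coincides with the weight aisle ${}^{\perp}(\cat P^{\perp})$ produced by the third step (modulo the shift convention for the weight coaisle, which can be absorbed into the identification).

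The main obstacle is less a conceptual difficulty than formal bookkeeping: one has to check that the three cited results combine cleanly on $\deriveq X$, in particular that $\deriveq X$ satisfies the property $(*)$ required for Theorem~\ref{t-structure} (which is already observed in the paper) and that the tensor and duality operations match at the compact level, as required for Lemma~\ref{preaisle and copreaisle}. The only genuinely non-formal ingredient is the rigidity calculation above, which relies crucially on the hypothesis that $X$ is separated and Noetherian so that every compact object of $\deriveq X$ is rigid; without this, one cannot pass between the dual--hom description obtained from the three-bijection composition and the tensor-based description of $\cat B_\phi$ appearing in the statement.
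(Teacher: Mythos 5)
Your proposal is correct and follows essentially the same route as the paper: it composes Proposition~\ref{tensor preaisle}, Lemma~\ref{preaisle and copreaisle}, and Theorem~\ref{t-structure}(ii), and then identifies $(\cat S_\phi^*)^{\perp}$ with $\cat B_\phi$ via tensor--hom adjunction and rigidity of perfect complexes. The only difference is that you spell out the rigidity computation that the paper compresses into the phrase ``which is the consequence of the tensor-hom adjunction,'' which is a welcome elaboration rather than a deviation.
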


\begin{proof}

	Let $\phi$ be a Thomason filtration of $X$.  By Proposition \ref{tensor preaisle} 
	we know $\phi \mapsto \cat S_{\phi}$ is a bijection.  Now sending $\cat S_{\phi}$ to 
	$\cat S_{\phi} ^*$ is again a bijection by Lemma \ref{preaisle and copreaisle}.  Since
	$\cat S_{\phi} ^*$ is a \tensor copreaisle of $\perfect X$,  the assignment $
	\cat S_{\phi} ^* \mapsto ( ^{\perp}((\cat 
	S_{\phi} ^*) ^{\perp}) ,(\cat S_{\phi} ^*)^{\perp})$ is a bijection by Theorem 
	\ref{t-structure}.  We only need to show that $\cat B_{\phi} =  (\cat S_{\phi} ^*)^{\perp}$ 
	which is the consequence of the tensor-hom adjunction.   

\end{proof}

\section{In the case of projective line}

In this section, we will specialize to the case of projective line $\mathbb{P}^1_k$ over a field 
$k$.  By the results of earlier sections, classifying compactly generated tensor t-structures of 
$\mathbf{D}(\mathrm{Qcoh \hspace{1mm}}\mathbb{P}^1_k)$ is equivalent to classifying thick 
\tensor preaisles of  $\perfect {\mathbb{P} ^1 _k}$.  For any smooth Noetherian scheme $X$ 
the inclusion functor from $\perfect X$ to the derived category of bounded complexes of 
coherent sheaves $\mathbf{D}^b(\mathrm{Coh\hspace{1mm}} X)$ is an equivalence.   
Therefore,  we restrict our attention to $\deriveb$.  Note that we define \tensor preaisles of 
$\deriveb$ with respect to the standard preaisle

\[ \mathbf{D}^{b, \leq 0} (\mathrm{Coh\hspace{1mm}} \mathbb{P}^1_k ) \colonequals \{ E \in 
\deriveb \mid H^i(E) = 0 \hspace{1mm} \forall i > 0\}.\]

\begin{lemma}
\label{ch lemma}
	A thick preaisle $\cat A$ of $\deriveb$ is a \tensor preaisle if and only if 
	\[\CO (-1) \otimes \cat A \subset \cat A.\]
\end{lemma}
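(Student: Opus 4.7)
The forward direction is immediate, since $\CO(-1)$ lies in $\mathbf{D}^{b,\leq 0}(\mathrm{Coh\hspace{1mm}} \mathbb{P}^1_k)$. For the converse, the plan is to introduce the auxiliary subcategory
\[ \cat B = \{ E \in \mathbf{D}^{b,\leq 0}(\mathrm{Coh\hspace{1mm}} \mathbb{P}^1_k) \mid E \otimes \cat A \subset \cat A \} \]
and to show that $\cat B$ coincides with all of $\mathbf{D}^{b,\leq 0}(\mathrm{Coh\hspace{1mm}} \mathbb{P}^1_k)$. Since the derived tensor product is triangulated in each variable and $\cat A$ is a thick preaisle of $\deriveb$, a routine verification shows that $\cat B$ is itself a thick preaisle of $\mathbf{D}^{b,\leq 0}$. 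It contains $\CO$ trivially and $\CO(-1)$ by hypothesis; moreover, associativity of the tensor product shows that $\cat B$ is closed under tensoring with $\CO(-1)$, so iterating places $\CO(-n)$ in $\cat B$ for every $n \geq 0$.

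To reach the positive twists, I would use the Euler sequence and its twists
\[ 0 \to \CO(n-1) \to \CO(n)^{\oplus 2} \to \CO(n+1) \to 0, \]
which after rotation presents $\CO(n+1)$ as an extension of $\CO(n)^{\oplus 2}$ and the positive shift $\CO(n-1)[1]$, both in $\cat B$. Induction on $n$ then yields $\CO(n) \in \cat B$ for all $n \geq -1$, and combined with the previous step we conclude $\CO(n) \in \cat B$ for every $n \in \Z$.

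For torsion sheaves, for each closed point $x \in \mathbb{P}^1_k$ of degree $d = [k(x):k]$, the ideal sheaf sequence $0 \to \CO(-d) \to \CO \to \CO_x \to 0$ exhibits the skyscraper $\CO_x$ as an extension of $\CO$ and $\CO(-d)[1]$, both in $\cat B$; iterated extensions of such skyscrapers then recover every torsion coherent sheaf. By Grothendieck's splitting theorem, an arbitrary coherent sheaf on $\mathbb{P}^1_k$ decomposes as a direct sum of a locally free part (a sum of $\CO(n_i)$'s) and a torsion part, both already handled, so $\mathrm{Coh\hspace{1mm}} \mathbb{P}^1_k \subset \cat B$. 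Finally, any $E \in \mathbf{D}^{b,\leq 0}$ is a finite iterated extension of the shifts $H^i(E)[-i]$ with $i \leq 0$ (via the standard truncation triangles), each a non-negative shift of a coherent sheaf, so $E \in \cat B$.

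The one subtle point will be obtaining the negative twists $\CO(-n)$ for $n \geq 2$: since $\cat B$ is only a preaisle and not closed under negative shifts, rotating a twisted Euler triangle produces $\CO(-n)[1] \in \cat B$ but not $\CO(-n)$ itself. The observation that rescues the argument is that $\cat B$ is additionally closed under the operation $\CO(-1) \otimes -$, a direct consequence of the assumption $\CO(-1) \otimes \cat A \subset \cat A$; this extra closure reaches all of $\{\CO(n)\}_{n \in \Z}$ without invoking any negative shift.
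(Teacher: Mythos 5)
Your proof is correct and follows essentially the same route as the paper's: both define the auxiliary subcategory $\cat B$, obtain all $\CO(-n)$ by iterated tensoring with $\CO(-1)$, and then generate the remaining line bundles and torsion sheaves via extensions and positive shifts from short exact sequences of coherent sheaves. The only differences are cosmetic choices of exact sequences (the paper uses the resolutions $0 \to \CO(-2)^{\oplus(m+1)} \to \CO(-1)^{\oplus(m+2)} \to \CO(m) \to 0$ and $0 \to \CO(-2)^{\oplus d} \to \CO(-1)^{\oplus d} \to T_x \to 0$ where you use the twisted Euler and ideal-sheaf sequences) and your use of truncation triangles where the paper invokes formality of complexes over a hereditary category.
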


\begin{proof}
	Suppose $\cat A$ is a \tensor preaisle then $\CO (-1) \otimes \cat A \subset \cat A$ is 
	true by definition.   Conversely,  suppose $\cat A$ is a preaisle of $\deriveb$.  Take 
	$\cat B \colonequals \{ B \in \mathbf{D}^{b, \leq 0} (\mathrm{Coh\hspace{1mm}} 
	\mathbb{P}^1_k ) \mid B \otimes \cat A \subset \cat A\}$. From our assumption, we have 
	$\CO (-1) \in \cat B$.  It is now easy to see that for every $n \geq 0$ we have $
	\CO (-n) \in \cat B$.
	
	As $\mathrm{Coh} \hspace{1mm}\mathbb{P}^1 _k$ has homological 
	dimension one,  every complex of $\deriveb$ is quasi isomorphic to the direct sum of its 
	cohomology sheaves,  see \cite[Proposition 6.1]{Rudakov}.  Also, every coherent sheave over 
	$\mathbb{P}^1_k$ is the direct sum 
	of line bundles and torsion sheaves.  Since $\cat B$ is a preaisle,  to show
	$\cat B = \mathbf{D}^{b,  \leq 0} (\mathrm{Coh\hspace{1mm}} \mathbb{P}^1_k )$ it is
	enough to show that $\cat B$ contains all line bundles and torsion sheaves. 
	
	For any $m \geq 0$ consider the following triangle coming from the corresponding short 
	exact sequence in $\mathrm{Coh} \hspace{1mm} \mathbb{P}^1 _k$; see for instance 
	\cite[Equation 6.3]{Rudakov},
	\[ \CO (-2) ^{\oplus (m +1)} \longrightarrow \CO (-1)^{\oplus (m+2)} \longrightarrow \CO (m) 
	\longrightarrow \CO (-2)[1].\]
	Since $\cat B$ is closed under extension and positive shifts we have $\CO (m) \in \cat B$. 
	
	Next,  for any indecomposable torsion sheave of degree $d$ say $ T_x$, which is 
	supported on a closed point $x \in \mathbb{P}^1_k$,  consider the following triangle coming 
	from the corresponding short exact sequence in 
	$\mathrm{Coh} \hspace{1mm} \mathbb{P}^1 _k$; see \cite[Equation 6.5]{Rudakov},
	\[  \CO (-2)^{\oplus d} \longrightarrow \CO (-1) ^{\oplus d} \longrightarrow T_x 
	\longrightarrow \CO (-2)[1].\]
	
	Again using the fact that $\cat B$ is closed under extension and positive shifts we have 
	$T_x \in \cat B$.

\end{proof}

Recall that for a set of objects $\cat S$ of $\cat T$ we denote the smallest cocomplete
preaisle containing $\cat S $ by $\preaisle S$.  If $\cat T$ does not have coproducts, for 
instance $\deriveb$, we denote $\preaisle S$ to be the smallest preaisle containing $\cat S$. 
Similarly we denote $\langle \cat S \rangle ^{\geq 0}$ to be the smallest copreaisle containing
$\cat S$.  Also recall that for any subcategory $\cat U$  we denote $\cat U^*$ the full 
subcategory \[ \cat U ^* = \{ X \in \cat T \mid X \cong \mathscr{H}om ( Y,  \mathbf{1}) \text{ 
for some } Y \in \cat U \}. \]

\begin{example}
	For a fixed $n \in \Z$ we denote 
	\begin{align*}
	& \cat B_n \colonequals \langle \CO (n) \rangle ^{\leq 0}; \text{ and} \\
	& \cat C_n \colonequals \langle \CO (n) ,  \CO (n+1) \rangle ^{\leq 0}.\\
	\end{align*}
	Using Lemma \ref{ch lemma}, we can check that $\cat B_n$ and $\cat C_n$  are not 
	\tensor preaisles of $\deriveb$.  Similarly,  $\cat B_n ^*$ and $\cat C_n ^*$ 
	provide examples of copreaisles of $\deriveb$ which are not \tensor copreaisles.  This can
	be observed using Lemma \ref{preaisle and copreaisle}. 
\end{example}

Recall that a Thomason filtration of $X$ is a map $\phi : \Z \rar 2^X$ such that 
$\phi(i)$ is a Thomason subset of $X$ and $\phi(i) \supset \phi (i+1)$ for all $i \in \Z$.  We say
$\phi $ is \emph{type-1} if $\bigcup _i \phi (i) \neq X$; and we say $\phi$ is \emph{type-2} if 
$\bigcup _i \phi (i) = X$ but not all $\phi(i) =X$.

Let $x \in \mathbb{P} ^1 _k$ be a closed point.  We denote the simple torsion sheaf supported 
on $x$ by $k(x)$.  Now, we give an explicit description of the \tensor preaisles of $\deriveb$ in 
terms of simple torsion sheaves and line bundles.

\begin{proposition}
\label{Main}
	Any proper thick \tensor preaisle of $\deriveb$ is one of the following forms:
	
	\begin{itemize}
\item[i.] $\langle k(x)[-i] \mid x \in \phi (i) \rangle ^{\leq 0};$

\end{itemize}

where $\phi$ is a type-1 Thomason filtration of $\mathbb{P}^1_k$.
\begin{itemize}

\item[ii.] $\langle \CO (n) [-i_0],  k(x)[-i] \mid \forall n \in \Z \text{ and } x \in \phi (i)  \rangle ^{\leq 0}$ 

\end{itemize}

	where $\phi$ is a type-2 Thomason filtration of $\mathbb{P}^1_k$ and $i_0$ a fixed 
	integer.
\end{proposition}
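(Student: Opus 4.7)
The plan is to exploit the hereditary nature of $\mathrm{Coh}\,\mathbb{P}^1_k$ together with the $\otimes$-preaisle condition to reduce any proper thick $\otimes$-preaisle $\cat A$ of $\deriveb$ to two pieces of data: a family $\phi$ recording which simple-torsion shifts $k(x)[-i]$ belong to $\cat A$, and, when applicable, a single integer $i_0$ recording the largest shift at which a line bundle appears in $\cat A$.

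First, since $\mathrm{Coh}\,\mathbb{P}^1_k$ has global dimension one, every $E \in \deriveb$ splits as $\bigoplus_i H^i(E)[-i]$, and each coherent sheaf decomposes as a sum of line bundles $\CO(n)$ and indecomposable torsion sheaves $T_x^{(d)} = \CO_x/\mathfrak{m}_x^d$ at closed points $x$. Thickness of $\cat A$ then means $\cat A$ is determined entirely by which shifts $\CO(n)[-i]$ and $T_x^{(d)}[-i]$ it contains. I would next establish the equivalence $T_x^{(d)}[-i] \in \cat A$ if and only if $k(x)[-i] \in \cat A$: the backward direction iterates the extension $0 \to k(x) \to T_x^{(d)} \to T_x^{(d-1)} \to 0$ inside the preaisle; the forward direction tensors $T_x^{(d)}[-i]$ with $k(x) \in \mathbf{D}^{b, \leq 0}(\mathrm{Coh}\,\mathbb{P}^1_k)$ (allowed by the $\otimes$-preaisle hypothesis), computes $T_x^{(d)} \otimes^L k(x)$ via the locally free resolution $0 \to \CO(-\deg x) \to \CO \to k(x) \to 0$ as a two-term complex whose kernel and cokernel are both $k(x)$, invokes heredity ($\mathrm{Ext}^2 = 0$ in $\mathrm{Coh}\,\mathbb{P}^1_k$) to split it as $k(x) \oplus k(x)[1]$, and then uses thickness of $\cat A$ to extract the summand $k(x)[-i]$. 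Setting $\phi(i) := \{x \text{ closed} : k(x)[-i] \in \cat A\}$ then produces a decreasing family of specialization-closed subsets of $\mathbb{P}^1_k$, i.e.\ a Thomason filtration.

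For the line bundle content, let $J := \{i : \CO(n)[-i] \in \cat A \text{ for some } n\}$. Tensoring with twists $\CO(m) \in \mathbf{D}^{b,\leq 0}$ shows $J$ is independent of $n$, and closure under positive shifts makes $J$ downward closed in $\Z$. If $\CO(n)[-i_0] \in \cat A$, then tensoring with $k(x)$ forces $k(x)[-i_0] \in \cat A$ for every closed $x$, giving $\phi(i_0) = \mathbb{P}^1_k$. Properness of $\cat A$ bounds $J$ above, for otherwise the triangles of Lemma \ref{ch lemma} would allow one to build every sheaf at every shift from line bundles, forcing $\cat A = \deriveb$. This naturally splits the classification: if $J = \emptyset$ then $\cat A = \langle k(x)[-i] : x \in \phi(i)\rangle^{\leq 0}$, yielding form (i); if $J \neq \emptyset$, setting $i_0 = \max J$ and adjoining the generators $\CO(n)[-i_0]$ recovers $\cat A$ in form (ii).

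The most delicate step will be matching this dichotomy with the type of $\phi$: one must show that $J = \emptyset$ forces $\phi$ to be type-1 (ruling out a proper pure-torsion $\otimes$-preaisle whose filtration exhausts $\mathbb{P}^1_k$) and that $J \neq \emptyset$ gives a $\phi$ which is type-2 (using $\phi(i_0) = \mathbb{P}^1_k$ together with properness to prevent $\phi \equiv \mathbb{P}^1_k$). Checking conversely that each listed pair $(\phi, i_0)$ really yields a proper thick $\otimes$-preaisle is then a routine verification via Lemma \ref{ch lemma}.
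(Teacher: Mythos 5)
Your proposal is correct, but it takes a genuinely different route from the paper. The paper's proof is essentially a two-step reduction: it invokes Proposition \ref{tensor preaisle} (which rests on Theorem \ref{t-structure} and the classification of compactly generated tensor t-structures from \cite{DS22}) to attach to $\cat A$ the unique Thomason filtration $\phi$ with $\cat A = \{E \mid \Supp H^i(E)\subset \phi(i)\}$, and then merely rewrites the generators of that support condition using the splitting of complexes over the hereditary category $\mathrm{Coh}\,\mathbb{P}^1_k$. You instead reconstruct $\phi$ directly from $\cat A$ by testing membership of the objects $k(x)[-i]$ and of shifted line bundles, which requires the auxiliary facts you sketch: the equivalence $T_x^{(d)}[-i]\in\cat A \Leftrightarrow k(x)[-i]\in\cat A$ (your computation $T_x^{(d)}\otimes^L k(x)\cong k(x)\oplus k(x)[1]$ is correct, and thickness is genuinely needed to extract the summand), the twist-invariance and downward closure of the set $J$ of line-bundle shifts, and the bound on $J$ from properness via the resolution triangles appearing in Lemma \ref{ch lemma}. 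What this buys is a self-contained, elementary proof for $\mathbb{P}^1_k$ that bypasses the derivator machinery and the general Thomason-filtration classification --- in effect you re-prove Proposition \ref{tensor preaisle} in this special case --- at the cost of more bookkeeping. One remark: the step you flag as most delicate is in fact immediate. When $J=\emptyset$ your sets $\phi(i)$ consist only of closed points, so $\bigcup_i\phi(i)$ misses the generic point and $\phi$ is automatically type-1 (the filtration with $\phi(i)$ equal to all closed points for every $i$ is type-1 and does occur, giving the proper preaisle of torsion complexes); and when $J\neq\emptyset$, adding the generic point to $\phi(i)$ for $i\leq i_0$ and using properness to exclude $\phi\equiv\mathbb{P}^1_k$ gives type-2 exactly as you say.
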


\begin{proof}
	
	Suppose $\cat A$ is a thick \tensor preaisle of $\deriveb $.  By Proposition \ref{tensor 
	preaisle} there is a unique Thomason filtration $\phi$ such that
	\[ \cat A = \{ E \in \deriveb \mid \Supp H^i(E) \subset \phi (i) \}.\]
	
	Since $\mathrm{Coh} \hspace{1mm}\mathbb{P}^1 _k$ has homological dimension one,  every 
	complex of $\deriveb$ is quasi isomorphic to the direct sum of its cohomology sheaves. 
	Therefore,  we can write $\cat A$ in terms of coherent sheaves alone,
	\[ \cat A = \langle F[-i] \mid F \in \mathrm{Coh} \hspace{1mm} \mathbb{P} ^1 _k \text{ and } 
	\Supp F \subset \phi (i) \rangle ^{\leq 0} . \]
	
	\textit{Case 1.} ( $\phi$ is type-1 ) Note that $\phi (i) \subsetneq \mathbb{P}^1 _k $ 
	for all $i$.
	Every coherent sheave over $\mathbb{P}^1_k$ is the direct sum of line bundles and torsion
	sheaves.  Since the support of any line bundle is whole $\mathbb{P}^1 _k$.  In this case,
	$\cat A$ only contains torsion sheaves.  As torsion sheaves can be generated 
	by
	simple torsion sheaves we have,
	
	\[ \cat A = \langle k(x)[-i] \mid x \in \phi (i) \rangle ^{\leq 0}.\]
	
	\textit{Case 2.}( $\phi$ is type-2 ) Since $\bigcup _i \phi (i) = X$  there is an integer $i_0$
	such that $\phi(i_0)$ contains the generic point of $\mathbb{P}^1 _k$.  
	We can take $i_0$ to be the largest such integer.  Observe that $\phi (i) = \mathbb{P}^1 _k $ 
	for all $i \leq i_0$ and $\phi(i_0+1)  \subsetneq \mathbb{P}^1 _k$.  Here we can check that
	\[ \cat A = \langle \CO (n) [-i_0],  k(x)[-i] \mid \forall n \in \Z \text{ and }  x \in \phi (i) \rangle 
	^{\leq 0}. \]
	
\end{proof}

Next, we will show which of these \tensor preaisles of $\deriveb$ are t-structures on 
$\deriveb$.  First, we prove a few lemmas.

\begin{lemma}
\label{delta lemma}
	Let $A \in \mathrm{Coh} \hspace{1mm}\mathbb{P}^1 _k$ be a torsion sheaf and 
	$\mathscr{L}$ be a line bundle.  Let $\delta : A \rar\mathscr{L}[1]$ be any map in $\deriveb$.   
	Then,  $cone(\delta) \notin A^{\perp}$.
\end{lemma}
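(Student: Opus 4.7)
The plan is to apply the functor $\Hom(A, -)$ to the distinguished triangle
\[ A \xrightarrow{\delta} \mathscr{L}[1] \to \mathrm{cone}(\delta) \to A[1] \]
and extract the nonvanishing of $\Hom(A,\mathrm{cone}(\delta))$ from cohomological dimension one together with Serre duality. Since $\mathbb{P}^1_k$ has global dimension one, $\Ext^{i}(A, \mathscr{L}) = 0$ for all $i \geq 2$. Hence the long exact sequence
\[ \cdots \to \Ext^1(A, \mathscr{L}) \to \Hom(A, \mathrm{cone}(\delta)) \to \Ext^1(A, A) \to \Ext^2(A,\mathscr{L}) = 0 \]
terminates and produces a surjection $\Hom(A, \mathrm{cone}(\delta)) \twoheadrightarrow \Ext^1(A, A)$.

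It therefore suffices to show that $\Ext^1(A, A) \neq 0$ for any nonzero torsion sheaf $A$ on $\mathbb{P}^1_k$. For this I would invoke Serre duality to rewrite
\[ \Ext^1(A, A) \cong \Hom(A,\, A \otimes \omega_{\mathbb{P}^1_k})^{\vee}, \]
where $\omega_{\mathbb{P}^1_k} \cong \CO(-2)$. Because $A$ is a torsion sheaf, its support consists of finitely many closed points, and any line bundle is free on the stalk at each such point; consequently $A \otimes \omega_{\mathbb{P}^1_k} \cong A$. Thus $\Ext^1(A, A)$ is $k$-dual to $\Hom(A, A)$, which is nonzero as it contains $\mathrm{id}_A$.

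Putting the two steps together gives $\Hom(A, \mathrm{cone}(\delta)) \neq 0$, so $\mathrm{cone}(\delta) \notin A^{\perp}$. The only mildly delicate point is the isomorphism $A \otimes \omega_{\mathbb{P}^1_k} \cong A$; as a backup one could replace the Serre duality step by a direct local computation, picking an indecomposable summand $T_x^{(n)}$ of $A$ and reading off a nonzero element of $\Ext^1(T_x^{(n)}, T_x^{(n)})$ from its length-two locally free resolution on the stalk at $x$.
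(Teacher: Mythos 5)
Your proof is correct, but it follows a different route from the paper's. The paper turns $\delta$ into an element of $\Ext^1(A,\mathscr{L})$, takes the corresponding extension $0 \to \mathscr{L} \to B \to A \to 0$, identifies $\mathrm{cone}(\delta) \cong B[1]$, observes that $B$ cannot be torsion and hence has a line-bundle summand $\mathscr{M}$, and concludes via $\Hom(A, B[1]) = \Ext^1(A,B) \supset \Ext^1(A,\mathscr{M}) \neq 0$. You instead never identify the cone: you run the long exact sequence of $\Hom(A,-)$ on the defining triangle, use $\Ext^2(A,\mathscr{L})=0$ (global dimension one) to get a surjection $\Hom(A,\mathrm{cone}(\delta)) \twoheadrightarrow \Ext^1(A,A)$, and then show $\Ext^1(A,A)\neq 0$ by Serre duality together with $A \otimes \omega \cong A$ for a torsion sheaf. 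What your approach buys is independence from the splitting theory of coherent sheaves on $\mathbb{P}^1_k$ (you need only homological dimension one and the nonvanishing of self-extensions of a torsion sheaf, so the argument transports verbatim to any smooth projective curve), and it handles all $\delta$, including $\delta = 0$, in one uniform stroke; the paper's version is more concrete but leans on the explicit structure of $B$. Two small points to keep in mind: the statement (and both proofs) tacitly assume $A \neq 0$, which you at least make explicit; and the isomorphism $A \otimes \omega \cong A$ deserves the one extra sentence that the finite support of $A$ lies in an affine open on which $\omega$ is trivial (or the local computation you offer as a backup), but this is a genuinely minor gap.
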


\begin{proof}
	 As we know $\Hom (A , \mathscr{L}[1]) \cong \mathrm{Ext}^1 (A,  \mathscr{L})$,  a map 
	 $\delta : A  \rar \mathscr{L}[1]$ corresponds to an element of the group 
	 $\Ext ^1(A,\mathscr{L})$.  By abuse of notation, we denote the corresponding element in 
	 $\Ext ^1(A,\mathscr{L})$ by $\delta$.
	
	Now we take the short exact sequence corresponding to $\delta \in \Ext^1(A, \mathscr{L})$, 
	say
		\begin{center}
		\begin{tikzcd}
	 		0 \rar & \mathscr{L} \rar & B \rar  & A \rar & 0.
		 \end{tikzcd}
		\end{center}
	This gives rise to a distinguished triangle 
	\begin{center}
	
	\begin{tikzcd}
	 \mathscr{L} \rar & B \rar & A \rar  [" \delta "] & \mathscr{L} [1]
	\end{tikzcd}
	\end{center}
	
	Hence,  $cone(\delta) \cong B[1]$.  From the short exact sequence, we observe that $B$ can 
	not be a torsion sheaf,  so it must have a torsion free summand.  Therefore, $\Hom (A, B[1]) = 
	\Ext ^1 ( A, B) \neq 0$. 
	
\end{proof}

Recall that a preaisle $\cat A$  is an aisle if  $(\cat A, \cat A^{\perp} [1])$ a t-structure.  

\begin{lemma}
\label{one step}
	Let $\cat A$ be a \tensor preaisle of $\deriveb$ and $\phi$ its corresponding 
	Thomason filtration.  If $\cat A$ is an aisle and
	$\phi (i) \neq  \emptyset$  for some $i$,  then $\phi (i-1) = \mathbb{P}^1 _k$.
\end{lemma}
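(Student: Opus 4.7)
The plan is a contradiction argument that exploits the right-adjoint property of the aisle truncation by testing against an infinite family of length thickenings of $x$. Suppose to the contrary that $x\in\phi(i)$ while $\phi(i-1)\subsetneq \mathbb{P}^1_k$. Any Thomason subset of $\mathbb{P}^1_k$ either contains the generic point (and then equals all of $\mathbb{P}^1_k$) or consists only of closed points; hence both $\phi(i-1)$ and $\phi(i)\subset \phi(i-1)$ are sets of closed points containing $x$, and for every line bundle $\mathscr{L}$ we have $\Supp\mathscr{L}\not\subset \phi(i-1)$, so $\mathscr{L}[-(i-1)]\notin \cat A$.

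The aisle property then supplies a truncation triangle $U \to \mathscr{L}[-(i-1)] \to V \to U[1]$ with $U\in\cat A$ and $V\in\cat A^{\perp}$, whose right-adjoint universal property yields an isomorphism $\Hom(A,U)\cong \Hom(A,\mathscr{L}[-(i-1)])$ for every $A\in\cat A$. I would test this against the family $A_n \colonequals \CO_{nx}[-i]$, where $\CO_{nx}\colonequals \CO_{\mathbb{P}^1_k}/\mathfrak{m}_x^{n}$ is the thickening of $x$ of length $n$. The short exact sequences $0\to k(x)\to \CO_{nx}\to \CO_{(n-1)x}\to 0$ together with closure of $\cat A$ under extensions and positive shifts and the initial fact $k(x)[-i]\in \cat A$ give $A_n\in\cat A$ inductively. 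A Serre duality computation on $\mathbb{P}^1_k$ (with $\omega=\CO(-2)$) then gives
\[
\Hom(A_n,\mathscr{L}[-(i-1)]) \;=\; \Ext^1(\CO_{nx},\mathscr{L}) \;\cong\; H^0\bigl(\CO_{nx}\otimes \mathscr{L}^{-1}\otimes\omega\bigr)^{\vee},
\]
whose $k$-dimension equals $n\cdot [\kappa(x):k]$ and so grows without bound in $n$.

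On the other side, $U$ is a bounded complex of coherent sheaves with $\Supp H^{j}U\subset \phi(j)$; since both $\phi(i-1)$ and $\phi(i)$ are proper subsets of $\mathbb{P}^1_k$, the sheaves $H^{i-1}U$ and $H^{i}U$ are torsion of finite length at $x$. Using that every object of $\deriveb$ splits as the direct sum of its shifted cohomology sheaves (as in the proof of Lemma \ref{ch lemma}) and that $\mathrm{Coh}\,\mathbb{P}^1_k$ has cohomological dimension one, one obtains
\[
\Hom(A_n,U)\;\cong\; \Hom\bigl(\CO_{nx},H^{i}U\bigr)\;\oplus\; \Ext^1\bigl(\CO_{nx},H^{i-1}U\bigr),
\]
and a stalk computation at $x$ shows that each summand remains bounded as $n\to\infty$ (governed by the finite lengths of $(H^{i}U)_x$ and $(H^{i-1}U)_x$). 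This contradicts the previous paragraph for $n$ large enough, completing the proof. The main obstacle is locating the correct family of test objects: one needs an infinite family in $\cat A$ at degree $i$ whose Hom-spaces into $\mathscr{L}[-(i-1)]$ grow without bound, so as to force the representing object $U\in\deriveb$ to have infinite-dimensional cohomology; the thickenings $\CO_{nx}$ produce such a family cleanly from material already at hand.
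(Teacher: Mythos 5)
Your proof is correct, and it takes a genuinely different route from the paper's. The paper reduces (after normalizing $i=0$) to showing that $\mathscr{L}[1]$ admits no truncation triangle, and the obstruction it uses is qualitative: by Lemma \ref{delta lemma}, the cone of any map $\delta\colon A\to\mathscr{L}[1]$ from a torsion sheaf is the shift of an extension of $A$ by $\mathscr{L}$, which necessarily has a torsion-free summand and hence receives a nonzero $\Ext^1$ from $A$, so the third term of any candidate triangle cannot lie in $\cat A^{\perp}$. Your argument is instead quantitative: you use the adjunction isomorphism $\Hom(A,U)\cong\Hom(A,\mathscr{L}[-(i-1)])$ for $A\in\cat A$ and play the unbounded growth $\dim_k\Ext^1(\CO_{nx},\mathscr{L})=n\,[\kappa(x):k]$ against the bound on $\Hom(\CO_{nx},H^iU)\oplus\Ext^1(\CO_{nx},H^{i-1}U)$ coming from the finite lengths of the stalks $(H^iU)_x$ and $(H^{i-1}U)_x$ (finite because $\phi(i)\subset\phi(i-1)\subsetneq\mathbb{P}^1_k$ forces both cohomology sheaves to be torsion). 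Both proofs rest on the same two inputs --- the support description of $\cat A$ via $\phi$ and the splitting of objects of $\deriveb$ into shifted cohomology sheaves --- and both locate the obstruction in cohomological degrees $i-1$ and $i$. The paper's route buys brevity and reuse, since Lemma \ref{delta lemma} is deployed again to rule out nontrivial tensor weight structures; your route buys independence from the structure theory of extensions of torsion sheaves by line bundles on $\mathbb{P}^1_k$, relying only on finiteness of coherent cohomology, which makes it more readily portable to other curves. Two small remarks: $\CO_{nx}[-i]\in\cat A$ follows directly from $\Supp\CO_{nx}=\{x\}\subset\phi(i)$ and the support description of $\cat A$, so the inductive extension argument is not needed; and Serre duality can be bypassed by computing $\Ext^1(\CO_{nx},\mathscr{L})$ from the resolution $0\to\CO(-n\deg x)\to\CO\to\CO_{nx}\to 0$, though your computation as written is correct.
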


\begin{proof}
Without loss of generality, we may assume $i = 0$.  If $\phi (0) = \mathbb{P} ^1 _k$ then
$\phi (-1) =  \mathbb{P} ^1 _k$ and there is nothing to prove.  Now suppose 
$\phi (0) \subsetneq \mathbb{P} ^1 _k$ then there is
a closed point $x \in \phi (0)$.  We will prove our claim by showing a contradiction. 

Let $\mathscr{L}$ be a line bundle on $\mathrm{Coh} \hspace{1mm}\mathbb{P}^1 _k$. If 
$\phi (-1) \neq \mathbb{P}^1 _k$ then $\mathscr{L}[1] \notin \cat A$.  Since $\mathrm{Ext}^1
  ( k(x),  \mathscr{L}) \neq 0$ we also have $\mathscr{L}[1] \notin \cat A ^{\perp}$.  Now, as
  $\cat A$ is given to be an aisle we must have a t-decomposition of $\mathscr{L}[1]$. 
  But Lemma \ref{delta lemma} says such a decomposition is not possible.
\end{proof}

\begin{definition}
	We say $\phi$ is a \emph{one-step} Thomason filtration of $\mathbb{P}^1 _k$ 
	if there is an integer $i_0$ and a Thomson subset $Z_{i_0}$ such that
	\begin{align*}
	\phi(j)  & = \mathbb{P}^1 _k  \text{ \hspace{0.5mm} if } j < i_0;\\
	& = Z_{i_0} \text{ \hspace{0.5mm} if } j = i_0;\\
	& = \emptyset \text{ \hspace{0.5mm} if } j > i_0.\\
	\end{align*}
\end{definition}

\begin{proposition}
\label{aisle}
	A \tensor preaisle of $\deriveb$ is an aisle if and only if the corresponding 
	 Thomason filtration is a one-step filtration. 
\end{proposition}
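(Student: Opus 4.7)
The plan is to combine the classification in Proposition \ref{Main} with Lemma \ref{one step}, together with an explicit construction of truncation triangles. For the ``only if'' direction, suppose $\cat A$ is an aisle with Thomason filtration $\phi$. Since any specialization-closed subset of $\mathbb{P}^1_k$ containing the generic point must equal $\mathbb{P}^1_k$, we get a dichotomy: either $\phi(i) = \mathbb{P}^1_k$ or $\phi(i) \subsetneq \mathbb{P}^1_k$. If $\phi$ is type-1, then no $\phi(i)$ equals $\mathbb{P}^1_k$, so Lemma \ref{one step} forces $\phi(i) = \emptyset$ for every $i$, giving the degenerate zero preaisle. If $\phi$ is type-2, the generic point belongs to some $\phi(j)$, forcing $\phi(j) = \mathbb{P}^1_k$; let $i_0$ be the maximal such integer (exists because $\phi$ is type-2 and the set $\{i : \phi(i) = \mathbb{P}^1_k\}$ is downward closed and nonempty but not all of $\Z$). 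Then $\phi(i_0+1) \subsetneq \mathbb{P}^1_k$, and applying Lemma \ref{one step} to the index $i_0+2$ shows $\phi(i_0+2) = \emptyset$ (otherwise $\phi(i_0+1) = \mathbb{P}^1_k$, contradicting maximality); by monotonicity, $\phi(i) = \emptyset$ for all $i \geq i_0+2$, so $\phi$ is one-step.

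For the converse, let $\phi$ be one-step with parameters $i_0$ and $Z_{i_0}$. If $Z_{i_0} = \mathbb{P}^1_k$ then $\cat A_\phi$ coincides with the standard aisle $\cat D^{\leq i_0}$ of $\deriveb$, and the usual truncation suffices. Otherwise, given $E \in \deriveb$, set $F \colonequals H^{i_0}(E)$ and consider the short exact sequence $0 \to \Gamma_{Z_{i_0}}(F) \to F \to F'' \to 0$ in $\mathrm{Coh}\,\mathbb{P}^1_k$, where $\Gamma_{Z_{i_0}}(F)$ is the maximal subsheaf of $F$ supported on $Z_{i_0}$. Compose the canonical map $\tau^{\leq i_0}E \to F[-i_0]$ with the quotient $F[-i_0] \to F''[-i_0]$ and let $E'$ denote its fiber, so $E' \to \tau^{\leq i_0}E \to F''[-i_0]$ is a distinguished triangle. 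A long exact cohomology computation gives $H^i(E') = H^i(E)$ for $i < i_0$, $H^{i_0}(E') = \Gamma_{Z_{i_0}}(F)$, and $H^i(E') = 0$ for $i > i_0$, placing $E' \in \cat A_\phi$. By the octahedral axiom, the cone $E''$ of $E' \to E$ fits in a triangle $F''[-i_0] \to E'' \to \tau^{> i_0}E$, hence has cohomology concentrated in degrees $\geq i_0$; thus $E''[-1] \in \cat D^{\geq i_0+1}$, and $\Hom(\cat A_\phi, E''[-1]) = 0$ by the standard t-structure since $\cat A_\phi \subset \cat D^{\leq i_0}$. So $E'' \in \cat A_\phi^{\perp}[1]$, and $E' \to E \to E''$ is the required truncation.

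The principal obstacle is the converse direction's construction of the truncation when $Z_{i_0} \subsetneq \mathbb{P}^1_k$; the essential ingredient is the canonical subsheaf $\Gamma_{Z_{i_0}}(F) \subset F$, which exists because $Z_{i_0}$ is a union of closed points in the smooth curve $\mathbb{P}^1_k$ and coherent sheaves decompose accordingly. Once the right $E'$ is identified as a homotopy fiber, the remaining verifications reduce to routine cohomology bookkeeping and the standard orthogonality between $\cat D^{\leq i_0}$ and $\cat D^{\geq i_0+1}$. A minor point is the trivial case $\phi \equiv \emptyset$, corresponding to $\cat A = 0$, which is an aisle but does not strictly match the letter of the one-step definition; this is handled as a degenerate case.
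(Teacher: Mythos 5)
Your strategy is the same as the paper's: necessity is extracted from Lemma \ref{one step} (your spelled-out case analysis, and your flagging of the degenerate aisles $0$ and $\deriveb$, are both fine), and sufficiency is obtained by building truncation triangles from the functor $\Gamma_{Z_{i_0}}$ applied in the critical degree $i_0$, which is exactly the paper's device. The construction of $E'$ as the fiber of $\tau^{\leq i_0}E \rar F''[-i_0]$ is correct and the cohomology bookkeeping placing $E'$ in $\cat A_\phi$ checks out.

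There is, however, a gap in the final orthogonality verification. The truncation triangle $E' \rar E \rar E''$ requires $E'' \in \cat A_\phi^{\perp}$: with the paper's conventions the third term must lie in $\Sigma^{-1}\cat V = \cat A_\phi^{\perp}$, not in $\cat A_\phi^{\perp}[1]$, and $\cat A_\phi^{\perp}$ is not closed under $\Sigma$. What you actually verify is $\Hom(\cat A_\phi, E''[-1]) = 0$, which follows from degree considerations alone and is strictly weaker --- it would hold just as well had you not quotiented by $\Gamma_{Z_{i_0}}(F)$ at all. Concretely, for $E = k(x)[-i_0]$ with $x \in Z_{i_0}$, the pair $E' = 0$, $E'' = E$ passes your test (since $E[-1]$ has cohomology only in degree $i_0+1$), yet it is not a truncation because $E$ is a nonzero object of $\cat A_\phi$. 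The real point is that both $\cat A_\phi$ and $E''$ have cohomology in degree $i_0$, so the orthogonality there is not a matter of degrees: since objects of $\deriveb$ split as sums of shifted sheaves, $\Hom(A, E'')$ for $A \in \cat A_\phi$ reduces to $\Hom(H^{i_0}(A), F'')$ with $H^{i_0}(A)$ supported on $Z_{i_0}$, and this vanishes precisely because the image of any such map would be a subsheaf of $F'' = F/\Gamma_{Z_{i_0}}(F)$ supported on $Z_{i_0}$, while $\Gamma_{Z_{i_0}}(F'') = 0$ by maximality of $\Gamma_{Z_{i_0}}(F)$. Adding this one observation closes the argument; without it, the step as written does not establish that $E''$ lies in the correct orthogonal.
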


\begin{proof}
	If $\cat A$ is a \tensor preaisle which is also an aisle then by Lemma \ref{one step} the 
	corresponding filtration is a one-step filtration.Conversely,  suppose the filtration is one step, 
	we will show that  every complex of $\deriveb$ can be decomposed into a triangle where 
	the first term is in $\cat A$ and the third term is in $\cat A ^{\perp}$.  Without loss of 
	generality we may assume the one step occurs at $i_0 = 0$,  and $\phi (0) = Z_0$ a 
	Thomason subset.  
	
	If $Z_0 =  \mathbb{P}^1 _k$,  then $\cat A = \mathbf{D}^{b, \leq 0} (\mathrm{Coh
	\hspace{1mm}} \mathbb{P}^1_k )$ and we get standard t-structure.  Now, suppose
	 $Z_0 \neq \mathbb{P}^1 _k$.  Since the filtration is one step we only need to
	 show sheaves at degree zero have t-decompositions,  all other shifted sheaves have 
	 obvious t-decompositions.  The functor $\Gamma_{Z_0}(-)$ gives a t-decomposition of 
	 sheaves at degree zero.

\end{proof}

Next,  we give an explicit description of \tensor copreaisles of $\deriveb$ in terms of simple torsion sheaves and line bundles.

\begin{proposition}
	Any proper thick \tensor copreaisle of $\deriveb$ is one of the following forms:
	
	\begin{itemize}
\item[i.] $\langle k(x)^* [i] \mid x \in \phi (i) \rangle ^{\geq 0};$

\end{itemize}

where $\phi$ is a type-1  Thomason filtration of $\mathbb{P}^1_k$.
\begin{itemize}

\item[ii.] $\langle \CO (n) [i_0],  k(x)^*[i] \mid \forall n \in \Z \text{ and } x \in \phi (i)  \rangle 
^{\geq 0}$ 

\end{itemize}
	
where $\phi$ is a type-2  Thomason filtration of $\mathbb{P}^1_k$ and $i_0$ a fixed 
integer.
\end{proposition}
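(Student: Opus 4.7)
The plan is to dualise Proposition \ref{Main} using the internal Hom duality $(-)^{*} = \mathscr{H}om(-,\CO)$. Since $\mathbb{P}^{1}_{k}$ is smooth, $\perfect{\mathbb{P}^{1}_{k}} \simeq \deriveb$ is rigidly compactly generated, and Lemma \ref{preaisle and copreaisle} supplies a bijection $\cat U \mapsto \cat U^{*}$ between thick \tensor preaisles and thick \tensor copreaisles of $\deriveb$. This bijection preserves properness, so any proper thick \tensor copreaisle $\cat X$ of $\deriveb$ has the form $\cat U^{*}$ for a unique proper thick \tensor preaisle $\cat U$; by Proposition \ref{Main} the latter has one of the two explicit forms, corresponding to a type-1 or a type-2 Thomason filtration $\phi$.

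The key auxiliary step is to show that the duality intertwines generation: if $\cat U = \langle \cat S \rangle^{\leq 0}$ for a set $\cat S \subset \deriveb$, then $\cat U^{*} = \langle \cat S^{*} \rangle^{\geq 0}$, where $\cat S^{*} = \{Y^{*} \mid Y \in \cat S\}$. This follows because $(-)^{*}$ is a contravariant triangulated auto-equivalence of $\deriveb$ (rigidity yields $(-)^{**} \cong \mathrm{id}$) which exchanges the shift $[n]$ with $[-n]$ and preserves distinguished triangles. Hence it carries the smallest subcategory containing $\cat S$ and closed under extensions and positive shifts onto the smallest subcategory containing $\cat S^{*}$ and closed under extensions and negative shifts.

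Finally, I would compute the duals of the generators occurring in Proposition \ref{Main}. Using $\mathscr{H}om(\CO(n), \CO) \cong \CO(-n)$ together with $\mathscr{H}om(X[-i], \CO) \cong \mathscr{H}om(X, \CO)[i]$, we obtain $(\CO(n)[-i_{0}])^{*} \cong \CO(-n)[i_{0}]$ and $(k(x)[-i])^{*} \cong k(x)^{*}[i]$. Substituting into Proposition \ref{Main} and noting $\{-n \mid n \in \Z\} = \Z$ produces precisely the two forms stated in the proposition. The only non-formal ingredient is the compatibility of the duality with generation in the second paragraph; this is essentially automatic from rigidity and the exactness of $(-)^{*}$, so I do not anticipate any serious obstacle beyond careful bookkeeping of shifts.
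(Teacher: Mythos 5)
Your proposal is correct and follows essentially the same route as the paper: the paper's proof also invokes Lemma \ref{preaisle and copreaisle} to identify every thick $\otimes$-copreaisle with $\cat A^{*}$ for a thick $\otimes$-preaisle $\cat A$, and then reads off the generators from Proposition \ref{Main}. Your extra paragraph verifying that $(-)^{*}$ intertwines generation (sending $\langle \cat S\rangle^{\leq 0}$ to $\langle \cat S^{*}\rangle^{\geq 0}$) and the explicit computation of the dual generators are details the paper leaves implicit, but the argument is the same.
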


\begin{proof}
	By the proof of Lemma \ref{preaisle and copreaisle},  we know that every \tensor 
	copreaisle of $\deriveb$ is of the form $\cat A^*$ where $\cat A $ is \tensor preaisle.  Now
	using the description given in Proposition \ref{Main} we conclude our result.

\end{proof}

The trivial \tensor copreaisles $\deriveb$ and $0$ give rise to tensor weight structures on 
$\deriveb$.  In contrast to the case of t-structures (see Proposition \ref{aisle}),  the next result
shows that, there are no other tensor weight structures on $\deriveb$.

\begin{proposition}
	The trivial weight structures  are the only tensor weight structures on $\deriveb$.
\end{proposition}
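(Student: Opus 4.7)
The plan is to show that the first coordinate $\cat X$ of any nontrivial tensor weight structure on $\deriveb$ must fail the weight-decomposition axiom, giving a contradiction. By axioms w0, w1, and the extension-closedness consequence of w3, together with the tensor hypothesis, $\cat X$ is a thick tensor copreaisle; the preceding proposition then reduces the task to ruling out the two families of such $\cat X$ attached to type-1 and type-2 Thomason filtrations $\phi$.

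The central structural input is a torsion constraint on the cohomology of every $X \in \cat X$. In the type-1 case, the generators $k(x)^*[i] \cong k(x)[i-1]$ are shifted torsion sheaves, and torsion is preserved under negative shifts, extensions, and direct summands, so each cohomology sheaf $\mathcal{H}^l(X)$ is torsion. In the type-2 case (with attached integer $i_0$), the additional line-bundle generators $\CO(n)[i_0]$ and their negative shifts place line-bundle cohomology only in degrees $\geq -i_0$, hence $\mathcal{H}^l(X)$ is torsion for every $l \leq -i_0 - 1$.

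A complementary constraint on $Y \in \cat X^\perp$ is extracted from the generators. In the type-1 case, pick $x_0 \in \phi(i_0)$ for some $i_0$ with $\phi(i_0) \neq \emptyset$; the shifts $k(x_0)[i_0-1-n] \in \cat X$ ($n \geq 0$) give $\Ext^m(k(x_0), Y) = 0$ for all $m \geq 1 - i_0$, and the local computation $\Ext^1(k(x_0), F) \cong F_{x_0}/\mathfrak{m}_{x_0}F_{x_0}$ combined with the splitting $Y \cong \bigoplus_l \mathcal{H}^l(Y)[-l]$ (homological dimension one) yields $\mathcal{H}^l(Y)_{x_0} = 0$ for every $l \geq -i_0$. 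In the type-2 case, the generators $\CO(n)[i_0-m] \in \cat X$ yield $H^j(Y \otimes \CO(-n)) = 0$ for all $n \in \Z$ and $j \geq -i_0$; the hypercohomology spectral sequence on $\mathbb{P}^1_k$ degenerates at $E_2$ by cohomological dimension one, and combining Serre vanishing with the computation $H^1(\CO(\ell)) \neq 0$ for $\ell \leq -2$ forces $\mathcal{H}^j(Y) = 0$ for $j \geq -i_0$ together with the absence of any line-bundle summand in $\mathcal{H}^{-i_0-1}(Y)$.

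Putting these together, consider the would-be weight decomposition $X \to \CO[i_0] \to Y$ (type 1) or $X \to \CO[i_0+1] \to Y$ (type 2). The torsion invariant forces the leftmost cohomology map $\mathcal{H}^{-i_0}(X) \to \CO$ or $\mathcal{H}^{-i_0-1}(X) \to \CO$ to vanish, so the long exact cohomology sequence produces an injection $\CO \hookrightarrow \mathcal{H}^{-i_0}(Y)$ or $\CO \hookrightarrow \mathcal{H}^{-i_0-1}(Y)$. In the type-1 case, passing to stalks at $x_0$ gives $\CO_{x_0} \hookrightarrow 0$, absurd. In the type-2 case, the splitting of coherent sheaves on $\mathbb{P}^1_k$ as line bundles plus torsion combined with the absence of any line-bundle summand forces $\mathcal{H}^{-i_0-1}(Y)$ to be purely torsion, contradicting the existence of a line-bundle subsheaf $\CO$. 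The principal technical hurdle is the careful bookkeeping of cohomological degrees and the verification that the torsion invariant on $\cat X$ propagates through every copreaisle closure operation.
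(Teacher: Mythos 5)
Your proof is correct, but it takes a genuinely different route from the paper's. You first invoke the classification of proper thick \tensor copreaisles of $\deriveb$ by Thomason filtrations and then, for each of the two types, exhibit an explicit object ($\CO[i_0]$, resp.\ $\CO[i_0+1]$) that cannot admit a weight decomposition: the torsion constraint on the cohomology of objects of $\cat X$ forces $\CO$ to inject into a cohomology sheaf of the third term $Y$, while the vanishing of $\Hom(-,Y)$ against the generators of $\cat X$ (computed via $\Ext^1(k(x_0),F)\cong F_{x_0}\otimes k(x_0)$ in type 1, and via Serre vanishing together with $H^1(\CO(\ell))\neq 0$ for $\ell\leq -2$ in type 2) shows that no such cohomology sheaf can exist. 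The paper instead argues directly on the copreaisle, without using the classification: it uses the tensor condition to show that once one shifted line bundle lies in $\cat A$ all of them do, splits into the cases ``only torsion sheaves'', ``line bundles with a maximal shift'', and ``all shifts of line bundles'', and kills the first two by Lemma \ref{delta lemma} (a non-split extension $0\to\mathscr{L}\to B\to A\to 0$ forces $B$ to have a torsion-free summand, so $\Ext^1(A,B)\neq 0$ and the cone is not in $A^{\perp}$), while the third case forces $\cat A=\deriveb$ via the torsion-sheaf extension triangles. The case split is essentially the same as your type-1/type-2 dichotomy, but the mechanisms differ: the paper's delta-lemma argument is shorter and self-contained, whereas your long-exact-sequence and stalk analysis is more explicit and in fact handles cleanly a point the paper leaves implicit, namely that the first term of a weight decomposition is an arbitrary complex with torsion cohomology in the relevant degrees rather than a single torsion sheaf as in the statement of Lemma \ref{delta lemma}.
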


\begin{proof}
	Suppose $\cat A$ is a \tensor copreaisle of $\deriveb$ which induces a weight structure.  We 
	claim that $\cat A$ can not be a copreaisle containing only torsion sheaves.  Indeed,  by 
	Lemma \ref{delta lemma} any line bundle $\mathscr {L}[1]$ can not have a weight 
	decomposition, so $\cat A$ must contain line bundles upto shifts.  If $\mathscr{L}[i] \in 
	\cat A$ for some $\mathscr{L}$, then for any line bundle $\mathscr{M}$,  $\mathscr{M}[i] = 
	\mathscr{H}om( \check{\mathscr{M}}\otimes \check{\mathscr{L}},  \mathscr{L}[i]) \in 
	\cat A$.
	
	Now, there are two cases: (1) either there is an integer $i$ such that
	$\mathscr{L}[i] \in \cat A$  and $\mathscr{L}[i+1] \notin \cat A$,  or (2) there is no such $i$
	and $\cat A$ contains all line bundles and their shifts.
	
	\textit{Case 1.} Suppose there is an integer $i$, then without loss of generality we can 
	assume $i = 0$.  By our assumption, for any line bundle $\mathscr{M}$,  $\mathscr{M} [1] 
	\notin \cat A$.  Since by tensor condition $\cat A$ contains all line bundles we can choose 
	$\mathscr{L}$ such that $\Ext^1 (\mathscr{L}, \mathscr{M}) \neq 0$, therefore, 
	$\mathscr{M} [1] \notin \cat A^\perp$.  Then by a similar argument as in Lemma 
	\ref{delta lemma}, $\mathscr{M} [1] $ can not have 
	a weight decomposition (since any distinguished triangles with $ \mathscr{M} [1]$ in the 
	middle will result in a third term having a summand isomorphic to $ \mathscr{M} [1]$,  and
	$ \mathscr{M} [1]$
	is not in $\cat A^{\perp}$).  
	Therefore, $\cat A$ can not induce a weight structure. 
	
	\textit{Case 2.} Suppose $\cat A$ contains all line bundles and their shifts.  In particular,
	it contains $\CO (-1)$, $\CO (-2)$ and all their shifts.  Now,  for any indecomposable torsion 
	sheave of degree $d$ say $ T_x$ supported on a closed point $x \in \mathbb{P}^1_k$,  
	consider the following triangle coming from the corresponding short exact sequence in 
	$\mathrm{Coh} \hspace{1mm} \mathbb{P}^1 _k$,
	\[  \CO (-2)^{\oplus d} \longrightarrow \CO (-1) ^{\oplus d} \longrightarrow T_x 
	\longrightarrow \CO (-2)[1].\]
	
	As $\cat A$ is closed under extensions, $T_x \in \cat A$.  This proves $\cat A$ 
	contains all torsion sheaves and their shifts.  Therefore, $\cat A$ must be equal to 
	$\deriveb$.

\end{proof}

\section*{Acknowledgement}

The authors are grateful for the peaceful work environment and the assistance of the support 
staff of HRI, Prayagraj.  The second author is supported in part by the INFOSYS scholarship. 

\bibliographystyle{alpha}
\bibliography{DS2211ref}

\begin{thebibliography}{ATJLS10}

\bibitem[ATJLS10]{AJS10}
Leovigildo Alonso~Tarr\'{\i}o, Ana Jerem\'{\i}as~L\'{o}pez, and Manuel
  Saor\'{\i}n.
\newblock Compactly generated {$t$}-structures on the derived category of a
  {N}oetherian ring.
\newblock {\em J. Algebra}, 324(3):313--346, 2010.

\bibitem[BBD82]{BBD}
A.~A. Be\u{\i}linson, J.~Bernstein, and P.~Deligne.
\newblock Faisceaux pervers.
\newblock In {\em Analysis and topology on singular spaces, {I} ({L}uminy,
  1981)}, volume 100 of {\em Ast\'{e}risque}, pages 5--171. Soc. Math. France,
  Paris, 1982.

\bibitem[Bez00]{Bez10}
Roman Bezrukavnikov.
\newblock Perverse coherent sheaves (after deligne), 2000.

\bibitem[Bon10]{Bon10}
M.~V. Bondarko.
\newblock Weight structures vs. {$t$}-structures; weight filtrations, spectral
  sequences, and complexes (for motives and in general).
\newblock {\em J. K-Theory}, 6(3):387--504, 2010.

\bibitem[BS18]{Bondarko18}
Mikhail~V. Bondarko and Vladimir~A. Sosnilo.
\newblock On constructing weight structures and extending them to idempotent
  completions.
\newblock {\em Homology Homotopy Appl.}, 20(1):37--57, 2018.

\bibitem[DS22]{DS22}
Umesh~V. Dubey and Gopinath Sahoo.
\newblock Compactly generated tensor t-structures on the derived category of a
  noetherian scheme.
\newblock {\em https://arxiv.org/abs/2204.05015}, 2022.

\bibitem[GKR04]{Rudakov}
A.~L. Gorodentsev, S.~A. Kuleshov, and A.~N. Rudakov.
\newblock {$t$}-stabilities and {$t$}-structures on triangulated categories.
\newblock {\em Izv. Ross. Akad. Nauk Ser. Mat.}, 68(4):117--150, 2004.

\bibitem[HPS97]{HPS97}
Mark Hovey, John~H. Palmieri, and Neil~P. Strickland.
\newblock Axiomatic stable homotopy theory.
\newblock {\em Mem. Amer. Math. Soc.}, 128(610):x+114, 1997.

\bibitem[KS19]{KS19}
H.~Krause and G.~Stevenson.
\newblock The derived category of the projective line.
\newblock In {\em Spectral structures and topological methods in mathematics},
  EMS Ser. Congr. Rep., pages 275--297. EMS Publ. House, Z\"{u}rich, [2019]
  \copyright 2019.

\bibitem[Pau08]{Pau08}
David Pauksztello.
\newblock Compact corigid objects in triangulated categories and
  co-{$t$}-structures.
\newblock {\em Cent. Eur. J. Math.}, 6(1):25--42, 2008.

\bibitem[SP16]{SP16}
Jan {S}\v{t}ov\'{\i}\v{c}ek and David Posp\'{\i}\v{s}il.
\newblock On compactly generated torsion pairs and the classification of
  co-{$t$}-structures for commutative noetherian rings.
\newblock {\em Trans. Amer. Math. Soc.}, 368(9):6325--6361, 2016.

\bibitem[Tho97]{Thomason}
R.~W. Thomason.
\newblock The classification of triangulated subcategories.
\newblock {\em Compositio Math.}, 105(1):1--27, 1997.

\end{thebibliography}

\end{document}